\def\q{\hfill\rule{1ex}{1ex}}
\def\0{\emptyset}
\def\0{\emptyset}
\def\q{\hfill\rule{1ex}{1ex}}
\newtheorem{theorem}{Theorem}[section]
\newtheorem{corollary}[theorem]{Corollary}
\newtheorem{prob}[theorem]{Problem}	\newtheorem{lemma}[theorem]{Lemma}
\newtheorem{prop}[theorem]{Proposition}
\newtheorem{proposition}[theorem]{Proposition}
\newenvironment{proof}{\noindent {\bf Proof.}}{\rule{2mm}{2mm}\par\medskip}
\newcommand\red[1] {{\color{red} #1}}
\newcommand\blue[1] {{\bf \color{blue} #1}}
\newcommand\equ[2]
\newcommand\eqn[2]
\newcommand\eqnn[1]
\newcounter{countclaim}
\begin{document}
	
	\title{A neighborhood union 
		condition 
		for the existence of 
		a spanning tree without 
		degree $2$ vertices\thanks{This research was partially supported by NSFC (No. 12371345, 12471325 and 12371340). }
		%of degree $2$
		% homeomorphically irreducible spanning trees
	}
	
	\author[1,2]{\small Yibo Li\thanks{Email: liyibo@stu.hubu.edu.cn}}
	
	\author[3]{\small Fengming Dong\thanks{Email: fengming.dong@nie.edu.sg and donggraph@163.com}}
	
	\author[4]{\small 
		Xiaolan Hu\thanks{Email: xlhu@ccnu.edu.cn}}
	
	\author[1,2]{\small Huiqing Liu\thanks{Corresponding author. Email: hqliu@hubu.edu.cn}}

	\affil[1]{\footnotesize 
		Hubei Key Laboratory of Applied Mathematics, Faculty of Mathematics and Statistics, 
		
		Hubei University, Wuhan 430062, China}
	
	\affil[2]{\footnotesize 
		Key Laboratory of Intelligent Sensing System and Security (Hubei University), Ministry of Education}
	
	\affil[3]{\footnotesize 
		National Institute of Education,
		Nanyang Technological University, 
		Singapore}

	\affil[4]{\footnotesize 
		School of Mathematics and Statistics \& Hubei Key Laboratory of Mathematical Sciences, 
		
		Central China Normal University, Wuhan 430079, PR China
	}

	\date{}
	\maketitle

	\begin{abstract}
		For a connected graph $G$, 
		a spanning tree $T$ of $G$ is called a homeomorphically irreducible spanning tree (HIST)
		if $T$ has no vertices of degree $2$.
		In this paper, we show that 
		if $G$ is a graph of order $n\ge 270$  and 
		$|N(u)\cup N(v)|\geq\frac{n-1}{2}$ 
		holds for every pair of nonadjacent vertices $u$ and $v$ in $G$, then
		$G$ has a HIST, unless $G$ 
		belongs to three
		exceptional families of graphs 
		or $G$ has a
		cut-vertex of degree $2$.
		% either $G$ has a HIST,	or $G$ belongs to four	exceptional graphs, or $G$ has a pendant vertex 	adjacent to a vertex of degree $2$. 
		%	 if and only if $G$ does not have 	a pendant vertex which is adjacent to a vertex of degree $2$.
		This result improves the latest conclusion, due to Ito and Tsuchiya, 
		that a HIST in $G$ can be guaranteed 
		if $d(u)+d(v)\geq n-1$ holds 
		for every pair of nonadjacent vertices $u$ and $v$ in $G$.
		
		%generalizes the degree sum condition presented by Ito and Tsuchiya in {\em J. Graph Theory 99(1)(2022), 162-170}.

		%\vskip 0.2cm
	\end{abstract}
	
	\noindent 	{\bf Keywords}: ~homeomorphically irreducible spanning tree;  spanning tree; neighborhood union condition

%\tableofcontents\Large \renewcommand{\baselinestretch}{1.5} 

	\section{Introduction}
	We consider only simple graphs in this article. 
	For any graph $G$, let $V(G)$ and $E(G)$ denote the set of vertices 
	and the set of edges in $G$. 
	For $v\in V(G)$, let $N_G(v)$ be 
	the {\em set of neighbors} of $v$ in $G$,  $N_G[v]:=N_G(v)\cup\{v\}$, and 
	$d_G(v):=|N_G(v)|$ denote 
	the {\em degree} of $v$ in $G$,
	where the index $G$
	will be omitted 
	 if there is no risk of confusion.
	A {\em pendant vertex} of $G$ is a vertex of degree $1$ in $G$.
	For any non-empty subset $S$ of $V(G)$, let $G[S]$ denote the subgraph of $G$ induced by $S$,
	and write $N_S(v)$ and $d_S(v)$
	for $N_G(v)\cap S$ and $|N_G(v)\cap S|$, respectively, 
	for each  $v\in V(G)$.

	We denote by $G-S$ the subgraph of $G$ induced by $V(G)\backslash S$. For a proper subgraph $H$ of $G$ and subset $S$ of $V(G)$, let $H+S$ be the subgraph of $G$ induced by $V(H)\cup S$. If $S=\{v\}$, then we simplify $G-\{v\}$ (or resp. $H+\{v\}$) to $G-v$ (or resp. $H+v$). A subset $C$ of $V(G)$ is called a {\em clique} if every two vertices in $C$ are adjacent. By $K_n$, we denote the {\em complete graph} of order $n$. For a subgraph $H$ of $G$, we consider it as both a subgraph and a vertex set of $G$. 
	
	Given two disjoint vertex sets $X$ and $Y$ of $G$, let $E(X,Y)$ be the set of all edges with one end in $X$ and one end in $Y$. If $X=\{x\}$, we simply write $E(x,Y)$ for $E(X,Y)$. When $Y=V\backslash X$, the set $E(X,Y)$ is called the {\em edge cut} of $G$ associated with $X$, and is denoted by $\partial(X)$. A {\em bond} of $G$ 
	is a minimal nonempty edge cut of $G$. 
	For a connected graph $G$, let
	\begin{eqnarray*}
		\delta(G)&:=&\mbox{min}\{d(u):u\in V(G)\},\\
		\sigma(G)&:=&\mbox{min}\{d(u)+d(v):uv\notin E(G),u\neq v\},\\
		NC(G)&:=&\mbox{min}\{|N(u)\cup N(v)|:uv\notin E(G),u\neq v\}.
	\end{eqnarray*} 
	Clearly, $\delta(G)\leq NC(G)\leq\sigma(G)$.
	
	For a connected graph $G$, a spanning tree $T$
	of $G$ is called a {\em homeomorphically irreducible spanning tree} (HIST) of $G$ if 
	$T$ has no vertices of degree 2. 
	%The problem of deciding whether an arbitrary graph contains	a HIST is in NP-complete. 
	Similar to the study of Hamiltonian graphs,
	the existence of a HIST has been studied in relation with $\delta(G)$, 
	$\sigma(G)$, $NC(G)$, 
	or other parameters  (see \cite{ABHT90,    ChRS12, Chsh13, FuTs20, ItTs22} for example). %\red{(References that were not cited were hidden)}
	Albertson, Berman, Hutchinson and Thomassen \cite{ABHT90} first found the condition $\delta(G)\ge 4\sqrt{2n}$ for 
	the existence of a HIST in $G$. 
	This condition was recently replaced by a weaker one 
	%$\delta(G)\ge 4\sqrt{n}$
	by Furuya, Saito and Tsuchiya~\cite{FuST24}.
	
	\begin{theorem}[\cite{FuST24}] \label{Thm1.2}
		Let $G$ be a connected graph of order $n$.
		If  
		$\delta(G)\geq4\sqrt{n}$, 
		then $G$ has a HIST.
	\end{theorem}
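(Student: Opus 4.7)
The plan is to construct the HIST by selecting a small \emph{internal skeleton} $D \subseteq V(G)$ and letting every vertex of $V(G) \setminus D$ appear as a leaf of $T$ attached to some vertex of $D$. The properties I would aim for on $D$ are: (i) $G[D]$ is connected, so a spanning tree $T_D$ of $G[D]$ exists; (ii) $D$ is \emph{3-dominating} in $G$, i.e., every $v \in V(G) \setminus D$ has at least three neighbors in $D$; and (iii) $|D|$ is strictly less than $\delta(G) - 2$, so every $u \in D$ has at least three $G$-neighbors available in $V(G) \setminus D$ to serve as pendants of $u$ in $T$. Given (i)--(iii), the HIST is obtained by taking $T_D$ and attaching each $v \in V(G)\setminus D$ as a leaf to one of its $D$-neighbors, chosen so that no $u \in D$ has $d_T(u)=2$.

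To produce such a $D$, I would combine a probabilistic selection with a greedy repair phase. Sample $D_0$ by including each vertex independently with probability $p := c/\sqrt{n}$ for a suitable constant $c$; then $\mathbb{E}[|D_0|] = c\sqrt{n}$, and using $\delta(G) \ge 4\sqrt{n}$ each vertex has expected $D_0$-degree at least $4c$. A Chernoff tail bound identifies an exceptional set $X$ of vertices with fewer than three $D_0$-neighbors, which is repaired by moving $X$ (or a small number of fresh vertices covering $X$) into $D_0$. Connectivity of $G[D]$ is then handled by adding at most one bridging vertex per pair of components---again using $\delta(G) \ge 4\sqrt{n}$ to guarantee that any two vertices of $G$ share a common $D_0$-neighbor in a typical sample. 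Once $D$ satisfies (i)--(iii), the leaf-assignment stage reduces to a short Hall-type check: the total ``demand'' $\sum_{u \in D} \max(3 - d_{T_D}(u),\,0) \le 3|D|$ is tiny compared with the supply $|V(G) \setminus D| = n - O(\sqrt{n})$, and each $u \in D$ has at least $\delta(G) - |D| \ge 3\sqrt{n}$ potential pendant neighbors.

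The main obstacle I expect is the coupling between the 3-domination requirement, connectivity of $G[D]$, and the hard size cap $|D| < \delta(G) - 2 = 4\sqrt{n} - O(1)$. A naive execution of the probabilistic method typically loses a constant factor in $|D|$ at each repair step, and that lost constant is precisely the gap between the stated bound $4\sqrt{n}$ and weaker ABHT-type bounds such as $4\sqrt{2n}$. I would address this by interleaving sampling and pruning: sample $D_0$, repair the exceptional set $X$ and any disconnection of $G[D_0]$, then greedily delete vertices from $D_0$ whose removal preserves both (i) and (ii), controlling $|D|$ by a potential-function argument tuned to the exact threshold $\delta(G) = 4\sqrt{n}$. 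A cleaner but weaker fallback is a purely greedy construction that starts from $D = V(G)$ and iteratively deletes vertices preserving 3-domination together with a common-neighbor-connectivity invariant, with the same degree bound driving the terminal size.
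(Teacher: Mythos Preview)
This theorem is not proved in the present paper: it is quoted from Furuya, Saito and Tsuchiya \cite{FuST24} and used as a black box (for instance inside the proofs of Lemma~\ref{le2.10}(iii) and Lemma~\ref{le2.13}). There is therefore no ``paper's own proof'' to compare your proposal against.

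As for the proposal itself, your outline is the right \emph{shape}---build a small connected dominating set $D$, span $G[D]$ by a tree $T_D$, and hang the remaining vertices as leaves so that every $u\in D$ acquires enough pendants to avoid degree~$2$---and this is indeed how the Albertson--Berman--Hutchinson--Thomassen bound $\delta(G)\ge 4\sqrt{2n}$ is obtained. But you correctly identify the real difficulty and do not resolve it: the gap between $4\sqrt{2n}$ and $4\sqrt{n}$ is exactly the constant factor that a naive probabilistic selection plus repair loses, and nothing in your sketch (the Chernoff step, the component-bridging step, the ``interleaved pruning with a potential function'') is specific enough to recover that factor. In particular, your size cap $|D|<\delta(G)-2$ together with $3$-domination forces $|D|$ into a window of width $O(\sqrt{n})$ with no slack, and the vague ``potential-function argument tuned to the exact threshold'' is where the actual content of \cite{FuST24} lives. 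So the proposal is a plausible plan for the weaker $4\sqrt{2n}$ theorem, but as a proof of the stated $4\sqrt{n}$ bound it has a genuine gap at precisely the point you flag.
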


	%	Some results on the existence of a HIST have been obtained by Albertson {\em et al.} \cite{ABHT90}, Chen {\em et al.} \cite{ChRS12}, Furuya {\em et al.} \cite{FuTs20}, Ito {\em et al.} \cite{ItTs22}  and others.
	
	%  In the following are some conclusions   on the existence of a HIST in a graph $G$ under conditions on $\delta(G)$,   $\sigma(G)$ and $NC(G)$. 
	
	%  the minimum degree condition for a graph containing a HIST due to Furuya {\em et al.}.

	In 2022, Ito and Tsuchiya \cite{ItTs22}
	found a condition on $\sigma(G)$ for
	the existence of a HIST. 
	
	\begin{theorem}[\cite{ItTs22}]	\label{Thm1.3}
		Let $G$ be a graph of order $n\geq8$.
			If 
		$\sigma(G)\geq n-1$, 
		then $G$ has a HIST.
	\end{theorem}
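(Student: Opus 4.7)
My plan is to prove Theorem \ref{Thm1.3} by a dichotomy on $\delta(G)$. If $\delta(G) \ge 4\sqrt{n}$, Theorem \ref{Thm1.2} yields a HIST immediately. Otherwise, fix a vertex $v_0$ of minimum degree $\delta$ and set $A = N[v_0]$, $B = V(G) \setminus A$; every $b \in B$ is nonadjacent to $v_0$, so the hypothesis forces $d(b) \ge n-1-\delta$. Hence each $B$-vertex has at most $\delta$ non-neighbors in $G$, and $G[B]$ is very dense, with minimum degree at least $|B| - \delta - 1$.

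My intended HIST is a spanning \emph{double star} centered on two adjacent vertices $u, w \in B$: take the edge $uw$, and attach every other $x \in V(G) \setminus \{u, w\}$ as a pendant of whichever of $u, w$ is adjacent to $x$, balancing so that both centers collect at least two pendants. Then the only non-leaves are $u$ and $w$, each of tree-degree at least $3$, so the result is a HIST. The construction requires an adjacent pair $\{u, w\} \subseteq B$ with $N(u) \cup N(w) \supseteq V(G) \setminus \{u, w\}$, plus enough overlap in their neighborhoods to distribute pendants evenly; once the covering condition holds, the balancing condition follows easily from the abundance of common neighbors inside $B$.

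The main obstacle is exactly finding the pair $\{u, w\}$, because $\sigma(G) \ge n-1$ gives information only about nonadjacent pairs while my construction needs a structural fact about an adjacent pair. I would pick $u \in B$ of maximum degree and search among $w \in N(u) \cap B$; if no such $w$ works then every candidate has a common non-neighbor $z_w$ with $u$, and applying the degree-sum bound to each of the nonadjacent pairs $(u, z_w)$ and $(w, z_w)$ inflates $d(z_w)$ so strongly that the total degree sum $\sum_v d(v) = 2|E(G)|$ eventually contradicts $|V(G)| = n$. Residual issues I expect to treat separately are (i) the regime where $|B| \le 1$ (in particular $v_0$ universal), handled by an ad hoc spanning star, and (ii) the small-$n$ range where $\delta < 4\sqrt{n}$ is forced but $|B|$ is still too small for a genuine double-star, in which case $G$ is already so close to complete that a direct, finite case check delivers the HIST.
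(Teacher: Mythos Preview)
Theorem~\ref{Thm1.3} is quoted from~\cite{ItTs22}; the present paper does not supply its own proof of it, so there is no in-paper argument to compare your plan against.

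On the plan itself: the dichotomy via Theorem~\ref{Thm1.2} is legal but nearly empty at the threshold $n\ge 8$ (for all $n\le 17$ one has $4\sqrt n>n-1$, so the first branch is vacuous), and the substantive step---producing an adjacent pair $u,w$ with $N[u]\cup N[w]=V(G)$ for the double star---is not established. The counting you sketch does not close: the hypothesis $\sigma(G)\ge n-1$ only pushes each $d(z_w)$ \emph{upward}, and a large value of $\sum_v d(v)=2|E(G)|$ is perfectly consistent with $|V(G)|=n$, so no contradiction emerges. If one tries to make the idea precise by bounding, for each $z\notin N[u]$, the number of $w\in N(u)$ with $wz\notin E(G)$, the $\sigma$-condition yields only $|N(u)\cap N(z)|\ge d(u)+d(z)-(n-2)\ge 1$, hence at most $d(u)-1$ bad $w$'s per $z$; summing over the $n-1-d(u)$ choices of $z$ gives $(n-1-d(u))(d(u)-1)$, which already exceeds $|N(u)|=d(u)$ once $d(u)\le n-3$, so a union bound cannot locate the desired $w$. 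A genuinely different mechanism is needed here. The two deferred items---$|B|\le 1$ and the ``small $n$'' range---are likewise not minor, since the theorem's hypothesis begins at $n=8$ and those cases are where most of the content lies.
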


	Broersma, Heuvel and Veldman
	\cite{BrDV93} showed that 
	under the condition $NC(G)\geq\frac{n}{2}$, $G$ is either Hamiltonian, or $G$ is the Petersen graph or $G$ belongs to three special families of graphs. 
	Motivated by the result in 
	\cite{BrDV93}, 
	we study the 
	existence of a HIST
	in a graph $G$ with a condition of 
	$NC(G)$ and obtain the following conclusion.

We are now going to introduce three graphs $H_1, H_2$ and $H_3$ of order $n$ shown in Figure~\ref{f1}, where $n\ge 5$ is odd,
before presenting our main result in this article. 
$H_1$ is a graph  which contains a cut-vertex $v$ of degree $2$ such that 
$H_1-v$ has exactly two 
components each of which is isomorphic to 
$K_{\frac{n-1}{2}}$. 
Suppose that $G_0, G_1$ and $G_2$ are vertex-disjoint graphs, where 
$G_0$ is  
isomorphic to $K_3$ with 
vertex set 
$\{v_1,v_2,v_3\}$, and for each $i\in [2]$, $G_i$ is isomorphic to  $K_{\frac{n-3}2}$.
$H_2$ is the graph obtained from 
$G_0, G_1$ and $G_2$
by adding an edge 
joining $v_i$ to a vertex 
$u_i$ in $G_i$ for each $i\in [2]$, while $H_3$ is the graph obtained from 
$H_2$
by adding an edge 
joining $v_3$ to a vertex 
$u_3$ in $G_2$. 
It is possible that $u_2$ and $u_3$ are the same vertex.

	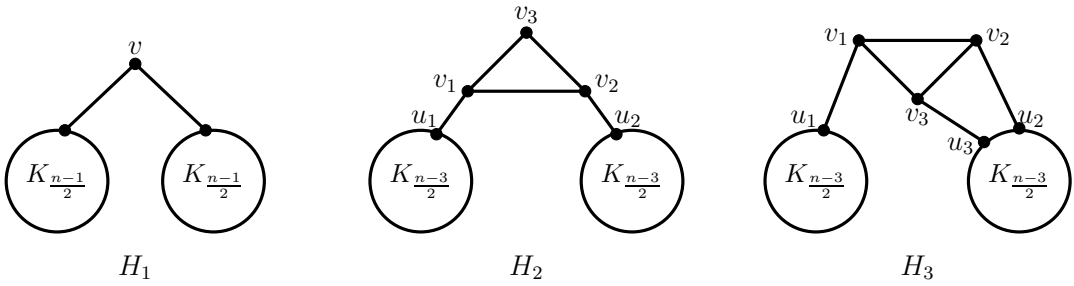
\begin{figure}[H]
		\begin{center}  
			\begin{tikzpicture}[scale=0.52]
				\tikzstyle{every node}=[font=\normalsize,scale=0.9]
				\draw (-5,9.2)node[above=0pt]{$v$};
				\filldraw (-5,9.2)circle(0.8ex);% 黑点 u
				\filldraw (-6.8,7.5)circle(0.8ex);% 黑点 u
				\filldraw (-3.2,7.5)circle(0.8ex);% 黑点 u
				\draw[very thick] (-7,6.2)circle(1.3);
				\draw[very thick] (-3,6.2)circle(1.3);
				\draw[very thick] (-5,9.2)--(-6.8,7.5);
				\draw[very thick] (-5,9.2)--(-3.2,7.5);
				\draw (-7,6.2) node{\small$K_{\frac{n-1}{2}}$};
				\draw (-3,6.2) node{\small$K_{\frac{n-1}{2}}$};
				\draw (-5,4) node[]{$H_1$};
				~~~~~~~~~~~~~~~~~~~~~~~~~~~~~~~~~~~~~~~~~~~~~~~~~~
				\filldraw (5,10)circle(0.8ex);% 黑点 u
				\draw (5,10)node[above=0pt]{$v_3$};
				\filldraw (6.5,8.5)circle(0.8ex);% 黑点 u
				\draw (6.5,8.7)node[right=0pt]{$v_2$};
				\filldraw (3.5,8.5)circle(0.8ex);% 黑点 u
				\draw (3.5,8.7)node[left=0pt]{$v_1$};
				\filldraw (2.7,7.4)circle(0.8ex);% 黑点 u
				\draw (3,7.8)node[left=0pt]{$u_1$};
				\filldraw (7.3,7.4)circle(0.8ex);% 黑点 u
				\draw (7,7.8)node[right=0pt]{$u_2$};
				\draw[very thick] (5,10)--(6.5,8.5);
				\draw[very thick] (5,10)--(3.5,8.5);
				\draw[very thick] (6.5,8.5)--(3.5,8.5);
				\draw[very thick] (6.5,8.5)--(7.3,7.4);
				\draw[very thick] (3.5,8.5)--(2.7,7.4);
				\draw[very thick] (2.3,6.2)circle(1.3);
				\draw[very thick] (7.7,6.2)circle(1.3);
				\draw (2.3,6.2) node{\small$K_{\frac{n-3}{2}}$};
				\draw (7.7,6.2) node{\small$K_{\frac{n-3}{2}}$};
				\draw (5,4) node[]{$H_2$};
				~~~~~~~~~~~~~~~~~~~~~~~~~~~~~~~~~~~~~~~~~~~~~~~~~~~	
				\filldraw (15,8.3)circle(0.8ex);% 黑点 u
				\draw (15,8.3)node[below=0pt]{$v_3$};
				\filldraw (13.5,9.8)circle(0.8ex);% 黑点 u
				\draw (13.5,9.9)node[left=0pt]{$v_1$};
				\filldraw (16.5,9.8)circle(0.8ex);% 黑点 u
				\draw (16.5,9.9)node[right=0pt]{$v_2$};
				\filldraw (12.6,7.5)circle(0.8ex);% 黑点 u
				\draw (12.7,7.8)node[left=0pt]{$u_1$};
				\filldraw (17.6,7.55)circle(0.8ex);% 黑点 u
				\draw (17.3,7.8)node[right=0pt]{$u_2$};
				\filldraw (16.7,7.2)circle(0.8ex);% 黑点 u
				\draw (16.7,7.1)node[left=0pt]{$u_3$};
				\draw[very thick] (15,8.3)--(16.7,7.2);
				\draw[very thick] (16.5,9.8)--(17.6,7.5);
				\draw[very thick] (13.5,9.8)--(16.5,9.8);
				\draw[very thick] (13.5,9.8)--(15,8.3);
				\draw[very thick] (16.5,9.8)--(15,8.3);
				\draw[very thick] (13.5,9.8)--(12.6,7.5);		
				\draw[very thick] (12.4,6.2)circle(1.3);
				\draw[very thick] (17.6,6.2)circle(1.3);
				\draw (12.4,6.2) node{\small$K_{\frac{n-3}{2}}$};
				\draw (17.6,6.2) node{\small$K_{\frac{n-3}{2}}$};
				\draw (15,4) node[]{$H_3$};
				~~~~~~~~~~~~~~~~~~~~~~~~~~~~~~~~~~~~~~~~~~~~~~~~~~~~		
			
			\end{tikzpicture}
		\end{center}
		
		\caption{Graphs $H_1$, $H_2$
			and $H_3$ of order $n$, where $n\ge 5$ is odd.  $u_2$ and $u_3$ in $H_3$  may coincide}
		
		\label{f1}
	\end{figure}

	For any integer $k$, 
	let $[k]$ denote the set $\{1,2,\dots,k\}$.
	
	\begin{theorem}\label{Thm1.5}
		Let $G$ be a connected graph of order $n$, where $n\geq270$. If 
		\begin{equation} \label{Th1.5-e1} 
			NC(G)\geq\frac{n-1}{2},
		\end{equation} 
		then $G$ has a HIST
		if and only if $G$ is not isomorphic to any $H_i$, $i\in [3]$,
		shown in Figure~\ref{f1}, 
		nor a graph with  a pendant vertex which is adjacent to a vertex of degree $2$. 
		\end{theorem}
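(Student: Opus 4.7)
The ``only if'' direction is a direct verification. For each $H_i$ with $i\in[3]$, every spanning tree is forced to contain a degree-$2$ vertex: in $H_1$ the cut-vertex has no third neighbor, and in $H_2$ or $H_3$ a short case analysis on the triangle $v_1v_2v_3$ shows that the two cliques cannot be linked through this bottleneck without some $v_i$ ending with tree-degree exactly $2$. If instead $G$ has a pendant $u$ adjacent to a degree-$2$ vertex $w$, every spanning tree must contain the edge $uw$ and then also the unique other edge at $w$, giving $d_T(w)=2$.

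For the ``if'' direction I would argue by contradiction: assume $G$ satisfies \eqref{Th1.5-e1}, is not isomorphic to any $H_i$, contains no pendant adjacent to a degree-$2$ vertex, and yet has no HIST. If $\delta(G)\ge 4\sqrt{n}$, Theorem~\ref{Thm1.2} immediately yields a HIST. So assume $\delta(G)<4\sqrt{n}$ and fix a minimum-degree vertex $v_0$; set $A=N(v_0)$ and $B=V(G)\setminus N[v_0]$. Then $|A|<4\sqrt{n}$, and applying \eqref{Th1.5-e1} to each non-edge $v_0u$ with $u\in B$ gives
\[
d(u) \;\ge\; \frac{n-1}{2} - |N(v_0)| \;>\; \frac{n-1}{2} - 4\sqrt{n}.
\]
For $n\ge 270$ this forces every vertex of $B$ to have large degree; a second round of \eqref{Th1.5-e1} applied to non-adjacent pairs inside $B$ shows that $G[B]$ is highly connected (e.g.\ the complement of $G[B]$ has small maximum degree and small matching number, by an Ore-type argument).

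The HIST is then constructed by isolating a small ``internal'' set $I\subseteq B$ such that $G[I]$ carries a spanning tree $T_I$ of minimum degree at least $2$ and $I$ dominates $A\cup\{v_0\}$, and then attaching every vertex of $V(G)\setminus I$ as a leaf to some vertex of $I$ with spare capacity, so that every vertex of $I$ ends up with tree-degree at least $3$ in the final tree. When this construction is blocked, the obstruction pinpoints $G$: either some vertex of $A$ becomes a pendant adjacent to a degree-$2$ vertex, or $G$ decomposes into two dense pieces joined by a very narrow bridge, which after exhausting all bridge configurations consistent with \eqref{Th1.5-e1} must be $H_1$, $H_2$, or $H_3$. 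I expect the main obstacle to be precisely this characterization step: the enumeration of all narrow-bridge structures, and the verification that any such $G$ not in the explicit list actually admits a HIST, will require a careful case analysis organized by $|A|$, $|E(A,B)|$, and the adjacencies within $A$, with repeated use of \eqref{Th1.5-e1} on pairs straddling $A$ and $B$. A secondary technical difficulty is producing $T_I$ with enough flexibility for leaf attachment; this likely calls for a quantitative refinement of Theorem~\ref{Thm1.2} applied to $G[B]$, exploiting that $\delta(G[B])$ is close to $|B|/2$.
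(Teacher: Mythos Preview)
Your high-level reduction matches the paper: invoke Theorem~\ref{Thm1.2} when $\delta(G)\ge 4\sqrt{n}$, and otherwise analyze the structure around a minimum-degree vertex $v_0$ with $A=N(v_0)$ and $B=V(G)\setminus N[v_0]$. The ``only if'' sketch is also fine; the paper formalizes it as Propositions~\ref{Prop2.2} and~\ref{Prop2.3}.

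The construction in the small-$\delta$ case, however, is not a proof and contains a concrete error. A spanning tree $T_I$ of $G[I]$ ``of minimum degree at least $2$'' does not exist: every tree on more than one vertex has leaves. Even under a charitable reading (that you intend to attach outside leaves to the $T_I$-leaves afterwards), you would need $I$ to dominate \emph{all} of $V(G)$, not just $A\cup\{v_0\}$, and you would need each $T_I$-leaf to have at least two private neighbours outside $I$; you give no mechanism for selecting such an $I$. The proposal is really a plan for a proof, and you say as much.

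The paper's construction is organized differently. It works outward from $v_0$: it first builds a small subtree on $N[v_0]\cup S$ for some $S\subset B$ with $|S|\le 10$ in which exactly one (or two) vertices have degree $2$ --- a ``$1$-quasi-HIT'' or ``$2$-quasi-HIT''. The key structural input is Lemma~\ref{le2.9}: for any such small $S$, $G[B\setminus S]$ has at most two components, and by Lemma~\ref{le2.10} each component is large and dense enough that Theorem~\ref{Thm1.2} yields a HIST inside it, even after deleting up to two further vertices. These component-HISTs are then glued onto the quasi-HIT at its degree-$2$ vertices, raising their degree to at least $3$ (Lemmas~\ref{le2.11}--\ref{le2.13}). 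The case analysis is not indexed by $|A|$ and $|E(A,B)|$ but by whether $G[B]$ is connected, by $\delta\in\{2,3,\ge 4\}$, and by the sizes $|N_B(u_i)|$ for $u_i\in A$; the exceptional graphs $H_1,H_2,H_3$ surface exactly when $\delta\in\{2,3\}$, $G[B]$ has two clique components, and the $A$--$B$ edges are too sparse for the gluing step to succeed.
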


	\medskip
	\noindent{\bf Remarks.}
	(i)	 It can be proved easily that 
	$\sigma(G)\ge n-1$ implies that 
	$NC(G)\ge \frac{n-1}2$. Thus, 
	Theorem~\ref{Thm1.5} is a generalization of 
	Theorem~\ref{Thm1.3}. 

	(ii) It will be proved in Section 2 that each $H_i$, where $i\in [3]$,  shown in Figure~\ref{f1}, does not 
	have a HIST. 
	Any graph with 
	a pendant vertex which is adjacent to a vertex of degree $2$
	also has no HISTs. 
	The details are given in  Corollary~\ref{cor1}.
	Thus, Theorem~\ref{Thm1.5} actually characterizes graphs of order at least $270$ which have HISTs.

	It can be verified that
	$NC(H_i)=\frac{n-1}2$ for each 
	$i\in [3]$.
	Thus, the following conclusion follows 
	directly from Theorem~\ref{Thm1.5}.
	
	\begin{corollary}
		Let $G$ be a connected graph 
		of order $n\geq270$. If $NC(G)\geq\frac{n}{2}$, 
		then $G$ has a HIST
		if and only if $G$ does not have a pendant vertex which is adjacent to a vertex of degree $2$.
	\end{corollary}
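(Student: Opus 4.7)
The plan is to derive the corollary as an immediate specialization of Theorem~\ref{Thm1.5} once the three exceptional graphs $H_1, H_2, H_3$ have been eliminated by the strengthened hypothesis. Since $NC(G) \ge n/2$ trivially implies $NC(G) \ge (n-1)/2$, Theorem~\ref{Thm1.5} applies, and under it $G$ has a HIST if and only if $G$ is not isomorphic to any $H_i$ for $i \in [3]$ and does not contain a pendant vertex adjacent to a vertex of degree $2$.

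The one piece of content I would need to supply is the explicit verification that $NC(H_i) = (n-1)/2$ for each $i \in [3]$, as already asserted in the remark preceding the corollary. For $H_1$, the minimizing nonadjacent pair is the cut-vertex $v$ together with a vertex $w$ inside one of the cliques $K_{(n-1)/2}$ with $vw \notin E(H_1)$: then $N(v)\cup N(w)$ equals the remaining $(n-1)/2-1$ vertices of that clique together with the single neighbor of $v$ in the opposite clique, giving exactly $(n-1)/2$. For $H_2$ and $H_3$ I would run a short case check over nonadjacent pairs drawn from the connector vertices $v_1,v_2,v_3$, the attachment vertices $u_1,u_2$ (and $u_3$ in $H_3$), and the interior vertices of the two large cliques; the minimizer again has size $(n-1)/2$, while every other nonadjacent pair produces a strictly larger union. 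The assumption $n\ge 5$ guarantees that each clique contains a vertex distinct from its distinguished attachment vertex, so a minimizing nonadjacent pair genuinely exists.

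Because $(n-1)/2 < n/2$, the hypothesis $NC(G) \ge n/2$ precludes $G \cong H_i$ for every $i \in [3]$. Combined with Corollary~\ref{cor1}, which shows that any graph with a pendant vertex adjacent to a degree-$2$ vertex has no HIST, Theorem~\ref{Thm1.5} then collapses to exactly the statement of the corollary. I do not foresee any real obstacle: the whole proof is a routine specialization once the elementary identities $NC(H_i) = (n-1)/2$ are recorded.
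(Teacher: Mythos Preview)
Your proposal is correct and follows exactly the paper's approach: the paper simply asserts that $NC(H_i)=\frac{n-1}{2}$ for each $i\in[3]$ and then states that the corollary follows directly from Theorem~\ref{Thm1.5}. Your sketch of the verification of $NC(H_i)=\frac{n-1}{2}$ actually supplies more detail than the paper itself gives.
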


	\section{Preliminaries}

	In this section, we first explain 
	why the conclusion of 
	Theorem~\ref{Thm1.5} excludes 
	all the four graphs $H_i$'s shown 
	in Figure~\ref{f1} and 
	any graph with a pendant vertex which is adjacent to a vertex of degree 2.
	Actually, any graph with a cut-vertex of degree 2 has no HISTs, as stated below. 
	
	\begin{proposition} \label{Prop2.2}
		Any connected graph with a 
		cut-vertex of degree $2$ 
		has no HISTs. 
	\end{proposition}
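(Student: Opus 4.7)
The plan is to show directly that no spanning tree of $G$ can avoid having $v$ as a degree $2$ vertex, where $v$ is the hypothesized cut-vertex of degree $2$. Let $v$ be such a cut-vertex with $N_G(v)=\{u_1,u_2\}$. Since $v$ separates $G$, the two neighbors $u_1$ and $u_2$ lie in distinct components of $G-v$, so every path from $u_1$ to $u_2$ in $G$ must pass through $v$.

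Now let $T$ be any spanning tree of $G$. Because $T$ is connected and contains both $u_1$ and $u_2$, there is a unique $u_1u_2$-path $P$ in $T$. Since $P$ is also a path in $G$, it must traverse $v$, and the only edges of $G$ incident with $v$ are $vu_1$ and $vu_2$. Hence $P$ enters and leaves $v$ via these two edges, giving $d_T(v)\geq 2$. On the other hand $d_T(v)\leq d_G(v)=2$, so $d_T(v)=2$, and $T$ is not a HIST. Since $T$ was arbitrary, $G$ has no HIST.

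This is essentially a one-step argument; there is no serious obstacle. The only subtlety worth stating carefully is the reason $v$ must appear on the $T$-path between $u_1$ and $u_2$, which follows immediately from the cut-vertex property applied to $G$ (not to $T$), together with the fact that paths in $T$ are paths in $G$.
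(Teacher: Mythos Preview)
Your proof is correct and essentially matches the paper's approach: the paper states this proposition without a displayed proof, but the (commented-out) argument in the source observes that each of the two edges at $v$ is a bond of $G$ and hence lies in every spanning tree, forcing $d_T(v)=2$. Your path-based phrasing reaches the same conclusion for the same underlying reason, namely that both edges incident to $v$ are bridges.
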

	
	\iffalse 
	\begin{proof}
		Let $N(v)=\{v_1,v_2\}$. Then $\{vv_1\}$ and $\{vv_2\}$ are bonds of $G$. 
		Suppose $G$ has a HIST $T$. 
		Then, by Proposition~\ref {Prop2.1}, $vv_1,vv_2\in E(T)$,
		implying that $d_T(v)=2$, 
		a contradiction to the definition of
		a HIST. 
	\end{proof}	\fi

	\begin{proposition} \label{Prop2.3}
		Let $G$ be a connected graph and let $S:=v_1v_2v_3v_1$ a triangle of $G$. 
		If $d(v_i)\leq3$ for each 
		$i\in [3]$ and 
		$G-S$ contains exactly two components, then $G$ has no HISTs.
	\end{proposition}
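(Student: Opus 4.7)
The plan is to argue by contradiction: assume $T$ is a HIST of $G$ and derive an impossibility from a short case analysis. Write $C_1,C_2$ for the two components of $G-S$. Since $d(v_i)\le 3$ and two of those neighbours are the other triangle vertices, each $v_i$ has at most one neighbour outside $S$; and because $T$ is a HIST, $d_T(v_i)\in\{1,3\}$ for every $i\in[3]$.

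I would then introduce two counts: let $k$ be the number of edges of $S$ lying in $T$, and let $e$ be the number of edges of $T$ with exactly one endpoint in $S$. Then $k\le 2$ by acyclicity, and $e\le 3$ since each $v_i$ supplies at most one such edge. The crucial observation is that, because $G-S$ has two components and $T$ must span $G$, the set of $T$-edges leaving $S$ has to contain at least one edge into $C_1$ and one into $C_2$, so $e\ge 2$. Summing degrees at the three $v_i$'s gives $\sum_{i\in[3]}d_T(v_i)=2k+e$.

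I would then dispose of each possible value of $k$. If $k=2$, say $v_1v_2,v_2v_3\in T$, then $d_T(v_1),d_T(v_3)\in\{1,2\}$ and $d_T(v_2)\in\{2,3\}$; the HIST condition forces $d_T(v_1)=d_T(v_3)=1$ and $d_T(v_2)=3$, yielding $e=1$ and contradicting $e\ge 2$. The case $k=1$ is entirely analogous and again forces $d_T(v_i)=1$ for every $i$, so $e=1$. If $k=0$, every $v_i$ satisfies $d_T(v_i)\le 1$, hence $d_T(v_i)=1$ and $e=3$; removing the three leaves $v_1,v_2,v_3$ together with their three incident $T$-edges leaves a forest on $n-3$ vertices with $n-4$ edges, which must therefore be a single tree, yet this tree is contained in $G-S$ and so cannot connect vertices of $C_1$ with vertices of $C_2$, a contradiction. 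The main obstacle is precisely this last case: unlike $k\in\{1,2\}$, it does not clash directly with the numerical bound $e\ge 2$, and one must instead combine the edge count in $T-S$ with the disconnectedness of $G-S$ to conclude.
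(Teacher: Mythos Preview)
Your proof is correct. The parity/counting framework with $k$ (triangle edges in $T$) and $e$ (edges of $T$ leaving $S$) works, and the $k=0$ case is handled cleanly by the observation that $T-S$ would then be a spanning tree of $G-S$.

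The paper's argument takes a different, somewhat shorter route. Rather than doing a case split on $k$, it first exploits the asymmetry between the two sides: since $|\partial(C_1)|+|\partial(C_2)|\le 3$, the smaller cut has exactly one edge, say $v_1x_1$. This edge is a bridge, so it is forced into $T$; then the cut $\partial(C_1\cup\{v_1\})=\{v_1v_2,v_1v_3\}$ together with $d_T(v_1)\ne 2$ forces both $v_1v_2,v_1v_3\in E(T)$. From there acyclicity gives $v_2v_3\notin E(T)$, so $d_T(v_2),d_T(v_3)\le 2$, hence $=1$, and $C_2$ is unreachable. In effect the paper jumps straight to your $k=2$ configuration by a forced-edge argument, so it never has to treat $k=0$ or $k=1$ at all. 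Your approach trades that shortcut for a uniform degree-sum count; it is a little longer but more symmetric, and the $k=0$ endgame (forest edge count versus disconnectedness of $G-S$) is a nice self-contained step that does not rely on locating a size-one edge cut.
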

	
	\begin{proof}
		Let $C_1$ and $C_2$ be the two components of $G-S$.
		Since $G$ is connected, we have $\partial(C_j)\neq\emptyset$ for 
		each $j\in [2]$.
		Assume $|\partial(C_1)|\leq|\partial(C_2)|$. Since $S$ is a triangle and $d(v_i)\leq3$ for all $v_i\in S$, $|\partial(C_1)|+|\partial(C_2)|\leq3$. Thus $|\partial(C_1)|=1$ and $1\leq|\partial(C_2)|\leq2$. 
		We can assume that
		$v_ix_i\in E(G)$ for each $i\in [2]$,
		where $x_i\in C_i$. 
		Then $E(S,C_1)=\{v_1x_1\}$  
		is an edge-cut of $G$. 
		
		Suppose $G$ has a HIST $T$. 
		%Then by Proposition~\ref{Prop2.1}, 
		Clearly, $v_1x_1\in E(T)$. 
		Note that $\partial(C_1\cup\{v_1\})=\{v_1v_2,v_1v_3\}$, and hence $E(T)\cap\{v_1v_2,v_1v_3\}\neq\emptyset$. Furthermore, $v_1v_2,v_1v_3\in E(T)$ as $d_T(v_1)\neq2$. Since $T$ is acyclic, $v_2v_3\notin E(T)$, i.e., $1\leq d_T(v_2)\leq2$ and $1\leq d_T(v_3)\leq2$. Notice that $\partial(C_2)\neq\emptyset$, then $E(T)\cap \partial(C_2)\neq\emptyset$, which implies that $d_T(v_2)=2$ or $d_T(v_3)=2$, a contradiction to the definition of HIST. 
	\end{proof}
	
	%	\medskip
	
	The following conclusion then follows directly from Propositions
	~\ref{Prop2.2} and~\ref{Prop2.3}.
	
	\begin{corollary}\label{cor1}
		If $G$ contains a pendant 
		vertex which is adjacent to some
		vertex of degree $2$, 
		or $G\cong H_i$, where $H_i$
		is a graph shown in Figure~\ref{f1}, 
		then $G$ does not contain HISTs. 
	\end{corollary}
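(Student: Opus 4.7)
The plan is to derive Corollary~\ref{cor1} as an immediate application of Propositions~\ref{Prop2.2} and~\ref{Prop2.3}, handling the three classes of excluded graphs case by case. No new structural arguments are required; the only work is verifying the hypotheses of the two propositions in each case.

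First, suppose $G$ contains a pendant vertex $u$ whose unique neighbor $v$ has degree $2$. I would observe that since $d(u)=1$ and $d(v)=2$, removing $v$ from $G$ isolates $u$ from $G-\{u,v\}$, so $v$ is a cut-vertex of $G$ of degree $2$; Proposition~\ref{Prop2.2} then gives that $G$ has no HIST. The case $G\cong H_1$ is handled similarly: by construction $v$ is a cut-vertex of degree $2$, so Proposition~\ref{Prop2.2} applies directly.

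For $G\cong H_2$ or $G\cong H_3$, I would set $S=\{v_1,v_2,v_3\}$, which induces a triangle in both graphs, and verify the hypotheses of Proposition~\ref{Prop2.3}. In $H_2$, the vertex $v_3$ has degree $2$ and $v_1,v_2$ have degree $3$; in $H_3$, each of $v_1,v_2,v_3$ has degree exactly $3$. In either case $G-S$ consists precisely of the two cliques $G_1\cong K_{(n-3)/2}$ and $G_2\cong K_{(n-3)/2}$, which are vertex-disjoint components (this remains true in $H_3$ whether $u_2$ and $u_3$ are distinct or coincide, since both lie in $G_2$). Thus Proposition~\ref{Prop2.3} applies and $G$ has no HIST.

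The main (and essentially only) thing to be careful about is the two-component condition for $H_3$: one must check that the extra edge $v_3u_3$, together with the possibility $u_2=u_3$, does not create additional components or merge $G_1$ and $G_2$ in $G-S$. Since $u_3\in G_2$ in either subcase, this is immediate, and the proof is complete.
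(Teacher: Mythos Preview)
Your proposal is correct and follows the same route as the paper: apply Proposition~\ref{Prop2.2} to the pendant-vertex case and to $H_1$, and apply Proposition~\ref{Prop2.3} to $H_2$ and $H_3$ via the triangle $\{v_1,v_2,v_3\}$. The paper states only that the corollary ``follows directly from Propositions~\ref{Prop2.2} and~\ref{Prop2.3}'' without spelling out the case analysis; your verification of the degree and two-component hypotheses is exactly what is needed.
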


	\iffalse 
	\noindent{\bf Note.}~~If $G\cong H_1$, or $G$ has a pendant vertex which is adjacent to a vertex of degree $2$, then by Proposition~\ref{Prop2.2}, $G$ has no HISTs; and if $G\cong H_i$ for $2\leq i\leq4$, then by Proposition~\ref{Prop2.3}, $G$ has no HISTs, where $H_i$ are shown in Figure 1.\fi

	%\subsection{Some fundamental results} 

	%\boxred{This subsection can be omitted as results seem to be commonsense.}
	
	In the remainder of this section, 
	we present some results that will be used in the following sections.

	\begin{lemma}\label{le2.4}
		Let $G$ be a graph with $X\subset V(G)$ and $z\in V(G)\backslash X$. If $G[X]$ is connected and $1\leq |N_X(z)|<|X|$, 
		then there exists 
		%two vertices $x,y\in X$ such that $zx,xy\in E(G)$ and $zy\notin E(G)$, that is, $zxy$ is 
		an induced path $zxy$ in $G[X\cup\{z\}]$, where $x,y\in X$.
	\end{lemma}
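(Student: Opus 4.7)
The plan is to exploit the connectedness of $G[X]$ together with the hypothesis $1\le |N_X(z)|<|X|$ to produce the required induced path by a simple connectivity/bond argument. Specifically, I would first set $A:=N_X(z)$ and $B:=X\setminus N_X(z)$. The assumption $1\le |N_X(z)|<|X|$ guarantees that both $A$ and $B$ are nonempty, and by definition they partition $X$.

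Since $G[X]$ is connected and $A,B$ are a nontrivial partition of $V(G[X])$, there must exist an edge of $G[X]$ with one endpoint in $A$ and the other in $B$; otherwise $A$ and $B$ would be unions of components of $G[X]$, contradicting connectedness. Fix such an edge $xy$ with $x\in A$ and $y\in B$.

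To verify that $zxy$ is the required induced path in $G[X\cup\{z\}]$, I would check the three required relations: $zx\in E(G)$ since $x\in A=N_X(z)$; $xy\in E(G)$ by choice of the edge; and $zy\notin E(G)$ since $y\in B=X\setminus N_X(z)$. Thus $\{z,x,y\}$ induces precisely the path $zxy$ of length $2$, which is the desired conclusion.

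There is essentially no obstacle here beyond unpacking the definitions, since the statement reduces to the standard fact that in a connected graph any nontrivial partition of the vertex set has a crossing edge; the lemma simply packages this fact together with the observation that the vertex $y$ produced on the non-neighbor side automatically makes $zxy$ induced.
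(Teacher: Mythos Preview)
Your argument is correct. It differs from the paper's approach: the paper picks a non-neighbor $u\in X$ of $z$, takes a shortest $(z,u)$-path in $G[X\cup\{z\}]$, observes that a shortest path is induced, and reads off $zz_1z_2$ as the desired induced path. Your proof instead partitions $X$ into $A=N_X(z)$ and $B=X\setminus N_X(z)$ and uses connectedness of $G[X]$ to find a crossing edge $xy$ directly. Your route is slightly more economical, since it avoids the (easy but unnecessary) detour through shortest paths and the observation that they are induced; the paper's argument, on the other hand, yields the stronger incidental fact that the whole shortest path is induced, though that is not used anywhere.
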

	
	\iffalse 
	\begin{proof}
		Since $|N_X(z)|<|X|$, there is a vertex $u\in X$ such that $uz\notin E(G)$. Note that $|N_X(z)|\geq1$ and $G[X]$ is connected, and hence there is a $(z,u)$-path in $G[X\cup\{z\}]$. Let $P:=z_0z_1\cdots z_k$ ($z_0=z$, $z_k=u$) be the shortest $(z,u)$-path in $G[X\cup\{z\}]$, where $k\geq2$. Then $z_iz_j\notin E(G)$ for all \blue{$1\leq i+1<j\leq k$}. Otherwise, if $z_iz_j\in E(G)$ for some \blue{$1\leq i+1<j\leq k$}, then $P':=z_0\cdots z_iz_j\cdots z_k$ is a shorter $(z,u)$-path than $P$, a contradiction. Therefore, $zz_1z_2$ is an induced path of length two in $G[X\cup\{z\}]$.
	\end{proof}
	\fi 
	
	%	\medskip
	For two nonempty vertex sets $X,Y\subseteq V(G)$, an $(X,Y)$-{\em path} is a path in $G$ 
	which starts at a vertex of $X$, ends at a vertex of $Y$, and whose internal vertices belong to
	$V(G)\backslash (X\cup Y)$.
	
	%\medskip
	
	\begin{lemma}[\cite{Bondy08}] \label{le2.5}
		Let $G$ be a connected graph. 
		For any two nonempty subsets $X$ and $Y$ of $V(G)$,  there is an $(X,Y)$-path in $G$.
		% for any two nonempty subsets $X$ and $Y$ of $V(G)$.
	\end{lemma}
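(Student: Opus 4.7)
My plan is to reduce to the case where $X$ and $Y$ are disjoint and then extract an $(X,Y)$-path from a shortest path between the two sets via a minimality argument.

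First I would dispose of the degenerate case $X\cap Y\neq\emptyset$: any single vertex $v\in X\cap Y$ is itself a (trivial, length $0$) $(X,Y)$-path, since it starts in $X$, ends in $Y$, and has no internal vertices to worry about. So for the rest of the proof I may assume $X\cap Y=\emptyset$.

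Next, using that $G$ is connected, pick any $x_0\in X$ and $y_0\in Y$ and take a shortest path in $G$ between them; more generally, among all paths in $G$ whose first vertex lies in $X$ and whose last vertex lies in $Y$ (such paths exist by connectedness), I would select one of minimum length, say $P=v_0v_1\cdots v_k$ with $v_0\in X$ and $v_k\in Y$.

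The key step is to verify that every internal vertex of $P$ avoids $X\cup Y$. If some $v_i$ with $1\le i\le k-1$ belonged to $X$, then the subpath $v_iv_{i+1}\cdots v_k$ would be a strictly shorter path from $X$ to $Y$, contradicting the minimality of $P$; similarly, $v_i\in Y$ for some $1\le i\le k-1$ would make $v_0v_1\cdots v_i$ a shorter such path. Hence no internal vertex of $P$ lies in $X\cup Y$, so $P$ is an $(X,Y)$-path, completing the proof.

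There is no genuine obstacle here; the only thing to be careful about is remembering that the definition of an $(X,Y)$-path places a condition only on the \emph{internal} vertices, so that the endpoint case $X\cap Y\neq\emptyset$ has to be handled separately from the shortest-path argument.
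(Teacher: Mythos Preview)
Your proof is correct and is the standard minimality argument for this elementary fact. The paper does not actually supply a proof of this lemma; it simply cites it from Bondy and Murty's textbook, so there is nothing to compare against.
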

	
	\begin{lemma} [\cite{Bondy08}]
		\label{le2.6}
		Let $G$ be a graph with $n$ vertices. If $\delta(G)>\frac{n-2}{2}$, then $G$ is connected.
	\end{lemma}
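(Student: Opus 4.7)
The plan is to argue by contradiction. Suppose that $G$ satisfies $\delta(G)>\frac{n-2}{2}$ but is disconnected. Then $G$ has at least two components, so I can choose a component $C$ of $G$ whose order is as small as possible. Since the orders of the components of $G$ partition $n$, this smallest component $C$ satisfies $|V(C)|\le \lfloor n/2\rfloor \le n/2$.

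Next, I would pick any vertex $v\in V(C)$. Because $C$ is a component, every neighbor of $v$ in $G$ already lies in $V(C)$, and $v$ itself is not one of its own neighbors, so
\[
d_G(v)=d_C(v)\le |V(C)|-1\le \frac{n}{2}-1=\frac{n-2}{2}.
\]
In particular, $\delta(G)\le d_G(v)\le \frac{n-2}{2}$, which directly contradicts the hypothesis $\delta(G)>\frac{n-2}{2}$. Hence $G$ must be connected.

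There is no real obstacle here: the statement is a classical textbook bound (essentially the trivial half of the Ore/Dirac-type degree conditions), and the pigeonhole step on the smallest component is all that is needed. The only minor care is to observe that $\delta(G)>\frac{n-2}{2}$ is equivalent to $2\delta(G)\ge n-1$, which makes even the case of odd $n$ work automatically since $|V(C)|\le \lfloor n/2\rfloor$ already gives $|V(C)|-1\le \frac{n-2}{2}$ in both parities.
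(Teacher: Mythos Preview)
Your proof is correct and is precisely the standard textbook argument. Note that the paper does not supply its own proof of this lemma; it simply cites it from Bondy and Murty \cite{Bondy08}, so there is nothing to compare against beyond observing that your contradiction via the smallest component is the classical justification one would expect.
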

	
%	\red{Because Lemma 2.7 is not cited, it was hidden.}
	
	\iffalse
	\begin{lemma}\label{le2.7}
		For an integer $k$,
		$k-7\geq16\sqrt{k}$ if $k\geq 270$;
		$k-11\geq16\sqrt{\frac{k-7}2}$ if $k\geq143$; and 
		$k-3\geq 16\sqrt{k-3}$ if $k\ge 259$. 
	\end{lemma}
	\fi
	
	\iffalse 
	\begin{proof}
		Note that $k-7\geq16\sqrt{k}$ if and only if $k\geq7$ and $k^2-14k+49\geq256k$, i.e., $k^2-270k+49\geq0$, and hence $k-7\geq16\sqrt{k}$. Similarly, we have 
		$k-11\geq16\sqrt{\frac{k-7}2}$ when $k\geq143$,
		and $k-3\geq 16\sqrt{k-3}$ when $k\ge 259$. 
	\end{proof}
	\fi 
	
	\section{Non-complete graphs $G$ 
		with $NC(G)\ge \frac{n-1}2$} 
	
	In this section, we always assume that 
	$G$ is a connected graph 
	of order $n$ such that $n>\delta(G)+1$ (i.e., $G$ is not complete)
	and $NC(G)\geq\frac{n-1}{2}$
	(i.e.,  the condition of (\ref{Th1.5-e1}) holds). 
	We also assume that $u$ is a vertex in $G$ with 
	$d(u)=\delta(G)$,   $N(u)=\{u_1,u_2,\ldots,u_{\delta(G)}\}$ and $W=V(G)\backslash N[u]$. Since $n>\delta(G)+1$, 
	we have $W\neq\emptyset$ and $E(N(u),W)\neq\emptyset$. 
	We are now going to establish some  conclusions on $G$ which will be applied in the proof of Theorem~\ref{Thm1.5}.

	\begin{lemma}\label{le2.8}
		If $\delta(G)<\frac{n-3}{2}$, 
		then $\{u_i: i\in [\delta(G)], |N_W(u_i)|\leq1\}$  is a clique.
	\end{lemma}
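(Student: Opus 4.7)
The plan is to prove the statement by contradiction, directly exploiting the neighborhood union condition $NC(G)\ge \frac{n-1}{2}$. Suppose two distinct vertices $u_i,u_j$ in the set $\{u_i: |N_W(u_i)|\le 1\}$ are nonadjacent. The goal is to give an upper bound on $|N(u_i)\cup N(u_j)|$ that, combined with the hypothesis $\delta(G)<\frac{n-3}{2}$, contradicts $NC(G)\ge \frac{n-1}{2}$.

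To get the bound, I would split $N(u_i)\cup N(u_j)$ into its intersection with $N[u]$ and its intersection with $W$. Since $u_i,u_j\in N(u)$ are assumed nonadjacent, neither one can contribute $u_i$ or $u_j$ as a neighbor, so the part inside $N[u]$ lies in $\{u\}\cup (N(u)\setminus\{u_i,u_j\})$, giving at most $1+(\delta(G)-2)=\delta(G)-1$ vertices. In $W$, by the defining property of the two chosen vertices, each contributes at most one neighbor, so at most $2$ further vertices. Hence
\[
|N(u_i)\cup N(u_j)|\le (\delta(G)-1)+2=\delta(G)+1.
\]

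Combining with the hypothesis $NC(G)\ge \frac{n-1}{2}$ yields $\delta(G)+1\ge \frac{n-1}{2}$, i.e., $\delta(G)\ge \frac{n-3}{2}$, which contradicts the standing assumption $\delta(G)<\frac{n-3}{2}$. Therefore every two distinct vertices in $\{u_i: i\in[\delta(G)],\ |N_W(u_i)|\le 1\}$ must be adjacent, so this set forms a clique. There is no real obstacle here: the argument is a one-step counting estimate, and the only care needed is to remember that since $u_i,u_j$ are being assumed nonadjacent, they themselves are excluded from $N(u_i)\cup N(u_j)$, which is exactly what produces the savings of $2$ from $\delta(G)+1$ rather than $\delta(G)+3$.
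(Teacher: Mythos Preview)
Your proof is correct and follows essentially the same approach as the paper: both argue by contradiction, bound $|N(u_i)\cup N(u_j)|$ by observing it lies in $(N[u]\setminus\{u_i,u_j\})\cup N_W(u_i)\cup N_W(u_j)$ to get the bound $\delta(G)+1$, and then use $\delta(G)<\frac{n-3}{2}$ to violate the condition $NC(G)\ge \frac{n-1}{2}$. The only cosmetic difference is that the paper derives $|N(u_i)\cup N(u_j)|<\frac{n-1}{2}$ directly, whereas you equivalently rephrase the contradiction as $\delta(G)\ge\frac{n-3}{2}$.
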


	\begin{proof} 
		Let $S=\{u_i: i\in [ \delta(G)], |N_W(u_i)|\leq1\}$. 
		Suppose the result fails. Then 
		there exist two vertices $u_p, u_q\in S$ such that $u_pu_q\notin E(G)$.
		Note that 
		$$
		N(u_p)\cup N(u_q)\subseteq(N[u]\backslash\{u_p,u_q\})\cup N_W(u_p)\cup N_W(u_q).
		$$ 
		Then 
		$$|N(u_p)\cup N(u_q)|\leq(\delta(G)+1-2)+2
		%	=\delta(G)+1
		<\frac{n-3}{2}+1
		=\frac{n-1}{2},$$ a contradiction to the condition of (\ref{Th1.5-e1}).
	\end{proof}
	
	\medskip
	
	Note that for each $w\in W$, $uw\notin E(G)$, implying that
	$|N(u)\cup N(w)|\geq\frac{n-1}{2}$
	by the given condition. 
	Thus, 
	\begin{equation}\label{e4} 
		\forall w\in W: \quad	d_{W}(w)\geq \frac{n-1}{2}-\delta(G).
		%\eqno(4)
	\end{equation}

	\begin{lemma} \label{le2.9}
		For any $S\subset W$ with $|S|<\frac{n+5}{4}-\delta(G)$, $G[W\backslash S]$ contains at most two components. 
	\end{lemma}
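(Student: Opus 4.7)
The plan is a proof by contradiction driven by the degree lower bound~(\ref{e4}) and the structural observation that a vertex in one component of $G[W\setminus S]$ can only reach another such component through vertices of $S$.

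Suppose $G[W\setminus S]$ has at least three components; pick representatives $w_1,w_2,w_3$ from three of them, say $C_1,C_2,C_3$. For each $i\in[3]$, any neighbor of $w_i$ lying in $W\setminus S$ must lie in $C_i$ itself, so
$$d_W(w_i)\leq |C_i|-1+|S|.$$
Pairing this with~(\ref{e4}) immediately yields
$$|C_i|\geq \frac{n+1}{2}-\delta(G)-|S|$$
for each $i\in[3]$.

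Summing the three lower bounds and comparing against the trivial upper bound $|C_1|+|C_2|+|C_3|\leq |W|-|S|=n-\delta(G)-1-|S|$ produces a linear inequality in $|S|$, which rearranges to $|S|\geq \frac{n+5}{4}-\delta(G)$, contradicting the hypothesis. The argument is pure counting and I do not foresee a genuine obstacle; the only care needed is to note that neighbors of $w_i$ lying outside $W$ (i.e.\ in $N(u)$) do not contribute to $d_W(w_i)$, and that the components $C_1,C_2,C_3$ together with $S$ must fit inside $W$.
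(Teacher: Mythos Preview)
Your proof is correct and follows essentially the same counting argument as the paper's. The only cosmetic difference is that the paper picks the smallest of the three components (so $|C_0|\le |W\setminus S|/3$) and applies the $NC$ condition directly to the pair $(u,x)$ with $x\in C_0$, whereas you invoke the derived bound~(\ref{e4}) on each component and sum; both routes collapse to the same inequality $|S|+\delta(G)\ge\frac{n+5}{4}$.
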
 
	
	\begin{proof}
		Suppose that $G[W\backslash S]$ contains at least three components. Then there is a component $C_0$ of $G[W\backslash S]$ satisfying 
		\begin{equation} \label{e2} 
			|C_0|\leq\frac{|W\backslash S|}{3}
			=
			\frac{n-1-\delta(G)-|S|}{3}.
		\end{equation} 
		Note that for each $x\in C_0$,
		\iffalse 
		\begin{equation} \label{e3} 
			N(x)\subseteq(C_0\backslash\{x\})\cup S\cup N(u).
		\end{equation} \fi 
		$N(x)\subseteq(C_0\backslash\{x\})\cup S\cup N(u)$, 
		implying that 
		\begin{eqnarray} \label{e5}
			|N(u)\cup N(x)|
			&\leq& |C_0|-1+|S|+\delta(G)
		%\nonumber \\
			\ \leq\ 
			\frac{n+2|S|+2\delta(G)-4}{3}
			\nonumber \\
			&<&
			\frac{n+2\cdot\frac{n+5}{4}-4}{3}
			%\nonumber \\&=&
			\ =\ \frac{n-1}{2},
		\end{eqnarray} 
		a contradiction to the condition of (\ref{Th1.5-e1}). So $G[W\backslash S]$ contains at most two components.
	\end{proof}

	\begin{lemma}\label{le2.10}
		Assume that $S\subset W$ and $G[W\backslash S]$ contains exactly two components $C_1$ and $C_2$. Then, for each $i\in[2]$, 
		\begin{enumerate}
			\item[(i)] $\frac{n+1}{2}-\delta(G)-|S|\leq|C_i|\leq\frac{n-3}{2}$;
			
			\item[(ii)]  if $S=\emptyset$, then $|N_{N(u)}(x)\cup N_{N(u)}(y)|\geq3$
			holds for any each pair of non-adjacent 
			vertices  $x$ and $y$  in $C_i$, and 
			
			\item[(iii)]  if $n\geq 143$
			and $|S|+\delta(G)< \frac{n+1}4$,
			then $C_i-S_i$ contains a HIST
			for any $S_i\subset C_i$ with $|S_i|\leq2$.
			% for any $S_i\subset C_i$ with $|S_i|\leq2$.
		\end{enumerate}
		%Moreover, if $n\geq270$ and $|S|<\frac{n+5}{4}-\delta(G)$, then $C_i-S_i$ has a HIST for any $S_i\subset C_i$ with $|S_i|\leq2$.
	\end{lemma}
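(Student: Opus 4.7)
The plan is to exploit the bound (\ref{e4}) together with the structural observation that for every $x\in C_i$, every neighbor of $x$ in $W$ lies in $(C_i\setminus\{x\})\cup S$, since $C_1$ and $C_2$ are distinct components of $G[W\setminus S]$. Consequently, $d_{C_i}(x)=d_W(x)-d_S(x)\geq \frac{n-1}{2}-\delta(G)-|S|$, and this single inequality drives all three parts.

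For (i), the lower bound $|C_i|\geq\frac{n+1}{2}-\delta(G)-|S|$ is immediate from $|C_i\setminus\{x\}|\geq d_{C_i}(x)$ applied to any $x\in C_i$. Since $|C_1|+|C_2|=|W|-|S|=n-1-\delta(G)-|S|$, substituting the lower bound on $|C_{3-i}|$ yields $|C_i|\leq\frac{n-3}{2}$.

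For (ii), with $S=\emptyset$ the components $C_1$ and $C_2$ of $G[W]$ have no edges between them, and $u\notin N(x)\cup N(y)$ since $x,y\in W$; hence for any non-adjacent pair $x,y\in C_i$ one obtains $N(x)\cup N(y)\subseteq (C_i\setminus\{x,y\})\cup N_{N(u)}(x)\cup N_{N(u)}(y)$. Combining $|N(x)\cup N(y)|\geq\frac{n-1}{2}$ from (\ref{Th1.5-e1}) with the bound $|C_i|\leq\frac{n-3}{2}$ from (i) forces $|N_{N(u)}(x)\cup N_{N(u)}(y)|\geq 3$.

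For (iii), let $H:=G[C_i\setminus S_i]$. For $x\in V(H)$, subtracting at most $|S_i|\leq 2$ neighbors from the lower bound on $d_{C_i}(x)$ yields $\delta(H)\geq\frac{n-1}{2}-\delta(G)-|S|-|S_i|$, while $|V(H)|\leq\frac{n-3}{2}-|S_i|$ by (i). A short case split on $|S_i|\in\{0,1,2\}$, using $\delta(G)+|S|<\frac{n+1}{4}$, shows $\delta(H)>\frac{|V(H)|-2}{2}$, so $H$ is connected by Lemma~\ref{le2.6}. I then invoke Theorem~\ref{Thm1.2}, which demands $\delta(H)\geq 4\sqrt{|V(H)|}$, to produce a HIST of $H$. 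The tightest case is $|S_i|=2$, in which $\delta(H)>\frac{n-11}{4}$ and $|V(H)|\leq\frac{n-7}{2}$, so the verification reduces to the numerical inequality $n-11\geq 16\sqrt{\frac{n-7}{2}}$, equivalently $n^2-150n+1017\geq 0$, which holds precisely for $n\geq 143$. This last numerical estimate in the $|S_i|=2$ subcase is the only real obstacle; the subcases $|S_i|\in\{0,1\}$ are strictly easier and go through by the same argument with more slack.
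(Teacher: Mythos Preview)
Your proof is correct and follows essentially the same approach as the paper: the key inequality $d_{C_i}(x)\geq\frac{n-1}{2}-\delta(G)-|S|$ drives all three parts, and part (iii) is completed via Theorem~\ref{Thm1.2} with the same numerical check $\frac{n-11}{4}\geq 4\sqrt{\frac{n-7}{2}}$ for $n\geq 143$. The only minor difference is that you establish connectedness of $C_i-S_i$ by invoking Lemma~\ref{le2.6} directly, whereas the paper argues by contradiction (assuming two components $H_1,H_2$ and deriving $d_{H_1}(x)+d_{H_2}(y)>|H_1|+|H_2|-2$); your route is slightly more economical but otherwise equivalent.
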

	
	\begin{proof} (i) 
		Let $x$ be any vertex in $C_i$, where $i\in [2]$. 
		Then by (\ref{e4}), 
		we have
		%	By (\ref{e4}), 	we have
		\equ{e9}
		{d_{C_i}(x)=d_{W\backslash S}(x)\geq d_W(x)-|S|\geq\frac{n-1}{2}
			-\delta(G)-|S|.}
		
		%	for all $x\in C_i$, $1\leq i\leq2$.
		Then 
		$$
		|C_i|\geq d_{C_i}(x)+1\geq\frac{n+1}{2}-\delta(G)-|S|,
		$$ 
		implying that 
		$$
		|C_i|=|W\backslash S|-|C_{3-i}|\leq\left(n-1-\delta(G)-|S|\right)-\left(\frac{n+1}{2}-\delta(G)-|S|\right)=\frac{n-3}{2}.
		$$ 
		Hence (i) holds.
		
		(ii) \iffalse Suppose that for any $x,y\in C_i$, either $xy\in E(G)$ or $|N_{N(u)}(x)\cup N_{N(u)}(y)|\leq2$. If $xy\in E(G)$ for any $x,y\in C_i$, then $C_i$ is a clique, a contradiction. So we assume that there exist two vertices $x',y'\in C_i$ such that $x'y'\notin E(G)$ and
		\fi 
		Suppose that $x$ and $y$ are any non-adjacent vertices in $C_i$ such that  
		$|N_{N(u)}(x)\cup N_{N(u)}(y)|\leq2$.
		Since $S=\emptyset$, we have 
		$$
		N(x)\cup N(y)\subseteq (C_i\backslash\{x,y\})\cup N_{N(u)}(x)\cup N_{N(u)}(y).
		$$  
		It follows that 
	\equ{le2.10-e2}
	{
		|N(x)\cup N(y)|\leq|C_i|-2+|N_{N(u)}(x)\cup N_{N(u)}(y)|\leq|C_i|-2+2
		=|C_i|\leq\frac{n-3}{2}<\frac{n-1}{2},
	}
		a contradiction to the condition of (\ref{Th1.5-e1}). Hence (ii) holds.
		
		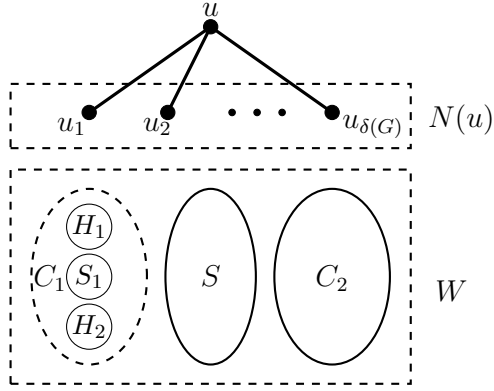
\begin{figure}[H]
			\begin{center}  
				\begin{tikzpicture}[scale=0.95]
					\tikzstyle{every node}=[font=\normalsize,scale=0.95]
					\draw (0,9) node[above=0pt]{$u$};
					\draw (-1.6,7.6)node[left=0pt]{$u_1$};
					\draw (-0.4,7.6)node[left=0pt]{$u_2$};
					\draw (1.7,7.6) node[right=0pt]{$u_{\delta(G)}$};
					\draw (3,5.3) node[right=0pt]{$W$};
					\draw (2.9,7.7) node[right=0pt]{$N(u)$};							
					\filldraw (0,9)circle(0.55ex);% 黑点 u		
					\filldraw (-1.7,7.8)circle(0.55ex);% 黑点 u
					\filldraw (-0.6,7.8)circle(0.55ex);% 黑点 u
					\filldraw (1.7,7.8)circle(0.55ex);% 黑点 u		
					\filldraw (0.3,7.8)circle(0.25ex);% 黑点 u
					\filldraw (0.65,7.8)circle(0.25ex);% 黑点 u
					\filldraw (1,7.8)circle(0.25ex);%黑点u	 		
					\draw[] (0,5.5)[thick] ellipse [x radius=18pt, y radius=35pt];
					\draw (0,5.8)[] node[below=0pt]{$S$};
					\draw[] (-1.7,5.5)[thick, dashed] ellipse [x radius=23pt, y radius=35pt];
					\draw (-2.25,5.8)[] node[below=0pt]{$C_1$};
					\draw[] (1.7,5.5)[thick] ellipse [x radius=23pt, y radius=35pt];
					\draw (1.7,5.8)[]node[below=0pt]{$C_2$};
					\draw[] (-1.7,6.2) ellipse [x radius=9pt, y radius=9pt];
					\draw (-1.7,6.2) node[]{\small$H_1$};
					\draw[] (-1.7,5.5) ellipse [x radius=9pt, y radius=9pt];
					\draw (-1.7,5.5) node[]{\small$S_1$};
					\draw[] (-1.7,4.8) ellipse [x radius=9pt, y radius=9pt];
					\draw (-1.7,4.8) node[]{\small$H_2$};
					\draw[very thick] (0,9)--(-1.7,7.8);		
					\draw[very thick] (-0,9)--(-0.6,7.8);		
					\draw[very thick] (0,9)--(1.7,7.8);		
					\draw[thick, dashed] (-2.8,4)rectangle(2.8,7);
					\draw[thick, dashed] (2.8,8.2)rectangle(-2.8,7.3);
				\end{tikzpicture}
			\end{center}	
			\caption{A partition of $V(G)$~~~~~~~~~~~}
			\label{f2}
		\end{figure} 
		
		(iii) We will apply Theorem~\ref{Thm1.2} to prove (iii). 
		We first show that both  $C_i-S_i$ are connected. Without loss of generality, 
		suppose that $C_1-S_1$ is disconnected. 
		Let $H_1$ and $H_2$ be any two components of $C_1-S_1$, as shown in Figure~\ref{f2}. 
		For each $w'\in C_1-S_1$, 
		by (\ref{e4}), we have
		\begin{eqnarray}\label{e60}
			d_{H_j}(w')=d_W(w')-d_S(w')
			-d_{S_1}(w')
		\ge	\frac{n-1}{2}
			-\delta(G)-|S|-|S_1|.
		\end{eqnarray} 
		Assume that  $x\in H_1$ and $y\in H_2$.
		Then, by (\ref{e60}), 
		\equ{e6}
		{
d_{H_1}(x)+d_{H_2}(y)
					\ge
2\left(\frac{n-1}{2}
-\delta(G)-|S|-|S_1|\right)
		=n-1-2\left(\delta(G)+|S|\right)
		-2|S_1|.
	}	
		Since $|S_1|\le 2$ and $\delta(G)+|S|< 
		\frac{n+1}4$, (\ref{e6}) implies that 
		\begin{eqnarray}\label{eq61}
			d_{H_1}(x)+d_{H_2}(y)
			&>&n-1-2\cdot \frac{n+1}4-2-|S_1|
			%\nonumber \\
		=\frac{n-3}{2}-2-|S_1|
			\nonumber \\
			&\ge& |C_1|-2-|S_1|
		%	\nonumber \\
		\	\ge\ |H_1|-1+|H_2|-1,
		\end{eqnarray} 
		a contradiction to the fact that 
		$d_{H_1}(x)\le |H_1|-1$ and 
		$d_{H_2}(y)\le |H_2|-1$, 
		where the second last inequality 
		follows from the result in (i)
		that $|C_1|\le \frac{n-3}2$.
		
		Now we know that 
		$C_i-S_i$ is connected.
		In the following, we will show that 
		$\delta(C_i-S_i)\ge 4\sqrt{|C_i-S_i|}$.
		Let $w\in C_i-S_i$ with $d_{C_i-S_i}(w)=\delta(C_i-S_i)$. Then 
		by (\ref{e9}), 
		\equ{e-n1}
		{
			\delta(C_i-S_i)=d_{C_i-S_i}(w)
			=
			d_{C_i}(w)-d_{S_i}(w)
			\ge \frac{n-1}{2}
			-\delta(G)-|S|-|S_i|.
		}
		
		Since $\delta(G)+|S|<\frac{n+1}4$, by (\ref{e-n1}), we have 
		\equ{e7}
		{\delta(C_i-S_i)
			>\frac{n-1}{2}-\frac{n+1}{4}-|S_i|
			=\frac{n-3}4-|S_i|.
		}
		Since $|S_i|\in \{0,1,2\}$,  it can be verified directly that 
		\eqn{e8}
		{\delta(C_i-S_i)-4\sqrt{\frac{n-3}{2}-|S_i|}
			&> & \frac{n-3}4-|S_i|
			-4\sqrt{\frac{n-3}{2}-|S_i|}
			\nonumber \\
			&\ge& \frac{n-3}4-2
			-4\sqrt{\frac{n-3}{2}-2}\ge 0,
		}
		where the last inequality follows from the condition $n\ge 143$.
		Then, by (\ref{e8}), we have 
		\equ{e10} 
		{
			\delta(C_i-S_i)>4\sqrt{\frac{n-3}{2}-|S_i|}
			\ge 4\sqrt {|C_i-S_i|}.
		}
		\iffalse 
		\blue{\eqn{e7}
			{\delta(C_i-S_i)
				&>&\frac{n-1}{2}-\frac{n+1}{4}-|S_i|=\frac{n-3}{4}-|S_i|
				\nonumber \\
				&=&\left(\sqrt{\frac{n-3}{2}-|S_i|}-2\right)^2-\frac{n+13}4+4\sqrt{\frac{n-3}{2}-|S_i|}.}
			And since $|S_i|\leq2$, (\ref{e7}) implies that 
			\eqn{e8}
			{\delta(C_i-S_i)
				&>&\left(\sqrt{\frac{n-3}{2}-2}-2\right)^2-\frac{n+13}4+4\sqrt{\frac{n-3}{2}-|S_i|}
				\nonumber \\
				&=&\frac{n-11}{4}-4\sqrt{\frac{n-7}{2}}+4\sqrt{\frac{n-3}{2}-|S_i|}
				\nonumber \\
				&\geq& 4\sqrt{\frac{n-3}{2}-|S_i|}
				\nonumber \\
				&\geq&4\sqrt{|C_i-S_i|},
		}}
		\red{where the second last inequality follows 
			from the condition that $n\ge 143$, and the last inequality follows from 
			the result in (i) that $|C_i|\leq\frac{n-3}2$.}
		%for all $i\in\{1,2\}$.
		\fi 
		Thus, by Theorem~\ref{Thm1.2}, $C_i-S_i$ has a HIST.
	\end{proof}
	
	\medskip

	\section{$1$-quasi-HIT and $2$-quasi-HIT} 
	
	In this section, we still assume that $G$ is a connected graph 
	of order $n$ such that $n>\delta(G)+1$ (i.e., $G$ is not complete)
	and $NC(G)\geq\frac{n-1}{2}$. 
	We also assume that $u$ is a vertex in $G$ with 
	$d(u)=\delta(G)$,   $N(u)=\{u_1,u_2,\ldots,u_{\delta(G)}\}$ and $W=V(G)\backslash N[u]$. 
	
	Any subtree $T$ of $G$ is a {\em HIT} of $G$ if $T$ has no vertices of degree 2. 
	A {\em $1$-quasi-HIT} of $G$ 
	is a subtree $T(v)$ of $G$ 
	such that $v$ is the only vertex of degree $2$ in $T(v)$
	and a {\it $1$-quasi-HIST} of $G$
	is a $1$-quasi-HIT of $G$ which is a spanning tree of $G$.
	Similarly, 
	a {\em $2$-quasi-HIT} of $G$ 
	is a subtree $T(v,w)$ of $G$
	such that $v$ and $w$ are the only 
	vertices of degee $2$ in $T(v,w)$,
	and a $2$-quasi-HIST of $G$
	is a $2$-quasi-HIT of $G$ which is a spanning tree of $G$.
	
	\iffalse 
	Let $T(v)$ (or resp. $T(v,w)$) be a subtree of $G$, then $T(v)$ (or resp. $T(v,w)$) is a {\em 1-quasi-HIT} (or resp. {\em 2-quasi-HIT}) of $G$ if $T(v)$ has only one vertex $v$ of degree $2$ (or resp. $T(v,w)$ has only two vertices $v$ and $w$ of degree 2). 
	\fi 
	The two results below follow directly.

	%We call $T(v)$ is an {\em extendable quasi-HIT} if $T(v)$ is a HIT and there exists $S\subset V(G)$ such that $S\cap T(v)=\{v\}$ and $G[S]$ has a HIST. Obviously, if $T(v)$ is an extendable quasi-HIT, then $T(v)$ can be extended to a new HIT $T'$ such that $T(v)\subset T'$. In particular, if $S\cup T(v)=V(G)$, then $T(v)$ can be extended to a HIST of $G$, and we call such a extendable quasi-HIT $T(v)$ good.
	
	\begin{lemma}\label{le2.11}
		Let $T(v)$ be a $1$-quasi-HIT of $G$. If there exists $S\subset V(G)$ such that $S\cap V(T(v))=\{v\}$ and $G[S]$ has a HIST, then $T(v)$ can be extended to a HIT $T'$ of $G$ with  $V(T')=V(T(v))\cup S$. In particular, if $V(T(v))\cup S=V(G)$, then $T'$ is a HIST of $G$.
	\end{lemma}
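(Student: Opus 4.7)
The plan is to form $T'$ by simply taking the edge-union of $T(v)$ with a HIST of $G[S]$, glued at the common vertex $v$, and then verify both that the resulting subgraph is a tree and that no vertex of it has degree $2$.

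More concretely, let $T_S$ be a HIST of $G[S]$, which exists by hypothesis. Since $v\in S$, the vertex $v$ lies in $T_S$, and as $T_S$ is a HIST we have $d_{T_S}(v)\ne 2$, so in particular $d_{T_S}(v)\ge 1$. Define $T'$ to be the subgraph of $G$ with $V(T')=V(T(v))\cup S$ and $E(T')=E(T(v))\cup E(T_S)$. Note that $E(T(v))\subseteq V(T(v))\times V(T(v))$ and $E(T_S)\subseteq S\times S$, and these two edge sets can share an edge only if that edge has both endpoints in $V(T(v))\cap S=\{v\}$, which is impossible. Hence the union is disjoint, and
$$|E(T')|=|E(T(v))|+|E(T_S)|=(|V(T(v))|-1)+(|S|-1)=|V(T')|-1,$$
using $|V(T')|=|V(T(v))|+|S|-1$. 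Combined with connectivity of $T'$, which holds because both $T(v)$ and $T_S$ are connected and meet at $v$, this shows that $T'$ is a spanning tree of $G[V(T(v))\cup S]$.

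It remains to check the degree condition. For any $w\in V(T(v))\setminus\{v\}$, every edge of $T'$ incident with $w$ belongs to $E(T(v))$, so $d_{T'}(w)=d_{T(v)}(w)\ne 2$ by the assumption that $v$ is the unique degree-$2$ vertex of $T(v)$. For any $w\in S\setminus\{v\}$, the edges of $T'$ incident with $w$ all lie in $E(T_S)$, so $d_{T'}(w)=d_{T_S}(w)\ne 2$ since $T_S$ is a HIST. Finally, at $v$ itself the two contributions add:
$$d_{T'}(v)=d_{T(v)}(v)+d_{T_S}(v)=2+d_{T_S}(v)\ge 3.$$
Thus $T'$ has no vertex of degree $2$, so $T'$ is a HIT of $G$ with $V(T')=V(T(v))\cup S$, and in the case $V(T(v))\cup S=V(G)$ it is a HIST. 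The main thing to be careful about is the disjointness of $E(T(v))$ and $E(T_S)$ and the resulting edge count, which is routine once one observes that the two trees meet only at the single vertex $v$; there is no combinatorial obstacle beyond this bookkeeping.
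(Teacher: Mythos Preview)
Your argument is exactly the straightforward gluing that the paper has in mind; indeed the paper does not even spell out a proof, stating only that the lemma ``follows directly.'' One small logical slip: from $d_{T_S}(v)\ne 2$ you infer $d_{T_S}(v)\ge 1$, but $0\ne 2$ as well, so strictly speaking you should note that $|S|\ge 2$ (otherwise $S=\{v\}$, the ``HIST'' of $G[S]$ is a single vertex, and $T'=T(v)$ would still have $v$ of degree~$2$). In every application in the paper $|S|\ge 2$ holds, so this is harmless, but the deduction as written is a non sequitur.
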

	
	\begin{lemma}\label{le2.12}
		Let $T(v,w)$ be a $2$-quasi-HIT of $G$. 
		If there exist two vertex-disjoint 
		subsets $S$ and $U$ of $V(G)$ such that $S\cap V(T(v,w))=\{v\}$, $U\cap V(T(v,w))=\{w\}$, and both $G[S]$ and $G[U]$ have HISTs, then $T(v,w)$ can be extended to a HIT $T'$ with $V(T')=V(T(v,w))\cup S\cup U$. In particular, if $V(T(v,w))\cup S\cup U=V(G)$, then $T'$ is a HIST of $G$.
	\end{lemma}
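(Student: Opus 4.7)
The plan is to build $T'$ by attaching HISTs of $G[S]$ and $G[U]$ to $T(v,w)$ at the common vertices $v$ and $w$, respectively. First I would invoke the hypothesis to obtain a HIST $T_S$ of $G[S]$ (necessarily containing $v$) and a HIST $T_U$ of $G[U]$ (necessarily containing $w$), and then define $T'$ to be the subgraph of $G$ with vertex set $V(T(v,w))\cup S\cup U$ and edge set $E(T(v,w))\cup E(T_S)\cup E(T_U)$.

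The first step is to verify that $T'$ is a tree. The three subtrees are pairwise almost disjoint in vertices: $V(T_S)\cap V(T(v,w))=\{v\}$, $V(T_U)\cap V(T(v,w))=\{w\}$, and $V(T_S)\cap V(T_U)\subseteq S\cap U=\emptyset$. Since the union of two trees sharing exactly one vertex is itself a tree, gluing $T_S$ onto $T(v,w)$ at $v$ produces a tree, and then gluing $T_U$ at $w$ produces $T'$, a tree on $V(T(v,w))\cup S\cup U$.

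The second step, and the only one requiring any calculation, is to verify that no vertex of $T'$ has degree $2$. For $x\in V(T(v,w))\setminus\{v,w\}$ the vertex $x$ lies in neither $S$ nor $U$, so $d_{T'}(x)=d_{T(v,w)}(x)\neq 2$ by the $2$-quasi-HIT property; analogously $d_{T'}(x)=d_{T_S}(x)\neq 2$ for $x\in S\setminus\{v\}$ and $d_{T'}(x)=d_{T_U}(x)\neq 2$ for $x\in U\setminus\{w\}$. The delicate cases are the glue vertices: at $v$ one has
\[
d_{T'}(v)=d_{T(v,w)}(v)+d_{T_S}(v)=2+d_{T_S}(v),
\]
and since $T_S$ is a HIST of $G[S]$ containing $v$, one has $d_{T_S}(v)\in\{1\}\cup\{3,4,\ldots\}$, whence $d_{T'}(v)\in\{3\}\cup\{5,6,\ldots\}$ and in particular $d_{T'}(v)\neq 2$. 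The same argument gives $d_{T'}(w)\neq 2$. The main (indeed only) obstacle is this degree check at the two glue vertices; the rest is structural bookkeeping on the disjointness hypotheses. The ``in particular'' clause is then immediate, since if $V(T(v,w))\cup S\cup U=V(G)$, then the HIT $T'$ is a spanning tree and hence a HIST of $G$.
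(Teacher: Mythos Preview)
Your argument is correct and is exactly the straightforward verification the paper has in mind; the paper itself gives no proof of this lemma, stating only that it (together with the companion Lemma on $1$-quasi-HITs) ``follows directly'' from the definitions.
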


	\begin{lemma} \label{le2.13}
		Assume that $n\ge 259$ and $S$ is a subset of $W$ with 
		%	Let $S\subset W$ with 
		$2\leq|S|<\frac{n+5}{4}-\delta(G)$.
		If $G[W]$ is connected and $G$ has a $1$-quasi-HIT $T(v)$ with $V(T(v))=N[u]\cup S$, where $v\in S$, then $G$ has a HIST.
	\end{lemma}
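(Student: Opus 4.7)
The strategy is to apply Lemma~\ref{le2.11} to the given tree $T(v)$. Since $V(T(v))=N[u]\cup S$, the two conditions $S^{*}\cap V(T(v))=\{v\}$ and $V(T(v))\cup S^{*}=V(G)$ together force $S^{*}:=(W\setminus S)\cup\{v\}$, so the problem reduces to exhibiting a HIST of $G[S^{*}]$. Because $|S|<\tfrac{n+5}{4}-\delta(G)$, Lemma~\ref{le2.9} guarantees that $G[W\setminus S]$ has at most two components, splitting the remainder of the argument into two cases.

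\emph{Case 1: $G[W\setminus S]$ is connected.} The bound~(\ref{e4}) together with the hypothesis on $|S|$ gives $d_{W\setminus S}(v)>\tfrac{n-3}{4}>0$, so $G[S^{*}]$ is connected, and the same bound shows $d_{S^{*}}(w)\geq\tfrac{n-1}{2}-\delta(G)-|S|$ for every $w\in S^{*}$. Since $|S^{*}|=n-\delta(G)-|S|>\tfrac{3n-5}{4}$, this rewrites as $\delta(G[S^{*}])\geq|S^{*}|-\tfrac{n+1}{2}$; a routine computation using $n\geq 259$ then yields $\delta(G[S^{*}])\geq 4\sqrt{|S^{*}|}$, so Theorem~\ref{Thm1.2} produces a HIST of $G[S^{*}]$ and Lemma~\ref{le2.11} finishes the case.

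\emph{Case 2: $G[W\setminus S]$ has two components $C_1,C_2$.} If $v$ has neighbors in both components, the argument of Case~1 carries over without change, since $G[S^{*}]$ is still connected and the minimum-degree estimate is the same. The hard sub-case is when $N_{C_1}(v)=\emptyset$ (say), in which $G[S^{*}]$ is disconnected and Lemma~\ref{le2.11} is useless. I would then exploit the connectedness of $G[W]$ to pick $s'\in S\setminus\{v\}$ with a neighbor $x_1\in C_1$ and with $\deg_{T(v)}(s')\geq 3$, together with $y_1\in N_{C_1}(x_1)\setminus\{x_1\}$ chosen so that $G[C_1\setminus\{y_1\}]$ remains connected, and then augment $T(v)$ by the two edges $s'x_1$ and $x_1y_1$ to obtain a $2$-quasi-HIT $T''$ whose degree-$2$ vertices are precisely $v$ and $x_1$ and whose vertex set is $N[u]\cup S\cup\{x_1,y_1\}$. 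Lemma~\ref{le2.12} is then applied with $S^{**}:=C_2\cup\{v\}$ (absorbed at $v$) and $U^{**}:=C_1\setminus\{y_1\}$ (absorbed at $x_1$); HISTs of both are produced by Theorem~\ref{Thm1.2} via the same style of minimum-degree estimates used in Case~1.

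The principal obstacle is exactly this sub-case of Case~2. One must simultaneously arrange that $s'$ has both a neighbor in the ``cut-off'' component $C_1$ and degree at least $3$ in $T(v)$ (so that $T''$ does not acquire a third degree-$2$ vertex), that $y_1$ can be chosen so that $G[C_1\setminus\{y_1\}]$ is connected (which requires care because the minimum-degree bound on $G[C_1]$ just barely misses the Dirac threshold in the regime $\delta(G)+|S|\in[\tfrac{n+1}{4},\tfrac{n+5}{4})$), and that the minimum-degree estimates on $G[C_2\cup\{v\}]$ and $G[C_1\setminus\{y_1\}]$ remain strong enough to invoke Theorem~\ref{Thm1.2}. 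The numerical threshold $n\geq 259$ is exactly what is needed to make all these arithmetic checks close at once.
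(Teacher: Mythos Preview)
Your overall architecture---reduce via Lemma~\ref{le2.11} or Lemma~\ref{le2.12} to finding HISTs in large pieces of $W$, and manufacture those HISTs via Theorem~\ref{Thm1.2}---matches the paper's. Case~1 is fine (though your degree bound $\frac{n-1}{2}-\delta(G)-|S|$ is one worse than necessary: since $S^*=W\setminus(S\setminus\{v\})$, you actually get $\frac{n-1}{2}-\delta(G)-(|S|-1)$, and it is this sharper bound that makes the arithmetic close at $n=259$ rather than $n\approx 267$). The real problem is your hard sub-case of Case~2, which you yourself flag as ``the principal obstacle'' without resolving it.

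The gap is the requirement $\deg_{T(v)}(s')\geq 3$. Connectedness of $G[W]$ does give some $s'\in S\setminus\{v\}$ with a neighbor in $C_1$, but nothing prevents that $s'$ from being a leaf of $T(v)$; when $|S|=2$ (permitted by hypothesis, and occurring in the paper's applications) the unique candidate $s'$ may well be a leaf. Adding the single edge $s'x_1$ then produces a third degree-$2$ vertex, and your $T''$ is not a $2$-quasi-HIT.

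The paper sidesteps this by two small but decisive changes. First, it deletes $S':=S\setminus\{v\}$ rather than $S$, so $v$ already lies in one of the two components (say $C_1$); one then seeks $x\in S'$ with a neighbor in $C_2$. Second, and crucially, instead of asking that $x$ already have degree $\geq 3$ in $T(v)$, the paper always attaches \emph{at least two} new edges at $x$: all of $E(x,C_2)$ when $N_{C_2}(x)=C_2$; two edges $xx_1,xx_4$ into $C_2$ when $|N_{C_2}(x)|\geq 2$; or one edge $xx_1$ into $C_2$ together with an extra edge $xx_3$ into $C_1\setminus\{v\}$ when $|N_{C_2}(x)|=1$ (the existence of $x_3$ coming from a short count showing $|N_{C_1}(x)|\geq 2$). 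Either way $x$ ends with degree $\geq 3$ in the augmented tree regardless of its original degree in $T(v)$. A further dividend of working with $S'$ is that $|S'|+\delta(G)<\frac{n+1}{4}$, so Lemma~\ref{le2.10}(iii) applies verbatim to give HISTs of $C_i$ minus up to two vertices; with your $S$ you only have $|S|+\delta(G)<\frac{n+5}{4}$, which misses that hypothesis and would force you to redo the connectivity and minimum-degree estimates by hand.
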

	
	\begin{proof} Assume that $T(v)$ is a $1$-quasi-HIT  of $G$ with $V(T(v))=N[u]\cup S$, where $v\in S$. Let $S':=S\backslash\{v\}$. Then $1\leq|S'|<\frac{n+1}{4}-\delta(G)$. By Lemma~\ref{le2.9}, $G[W\backslash S']$ contains at most two components.
		
		\noindent {\bf Case 1}: 
		$G[W\backslash S']$ is connected.
		
		By (\ref{e4}), we have 
		\eqnn
		{
			\delta(G[W\backslash S'])&\geq&\frac{n-1}{2}-\delta(G)-|S'|>\frac{n-1}{2}-\frac{n+1}{4}
			%\\&=&
		=	\frac{n-3}{4}\geq4\sqrt{n-3}\ge 4\sqrt{|W\backslash S'|},
		} 
		where the second last inequality follows from the condition that $n\ge 259$ 
		and the last inequality follows from 
		the fact that 
		$$
		|W\backslash S'|= n-|N[u]\cup S'|
		\le n-\delta(G)-2\le n-3.
		$$
		Then, by Theorem~\ref{Thm1.2}, $G[W\backslash S']$ has a HIST. 
		
		Note that $V(T(v))\cap (W\backslash S')=\{v\}$ and $V(T(v))\cup (W\backslash S')=V(G)$. By Lemma~\ref{le2.11}, $G$ has a HIST.

		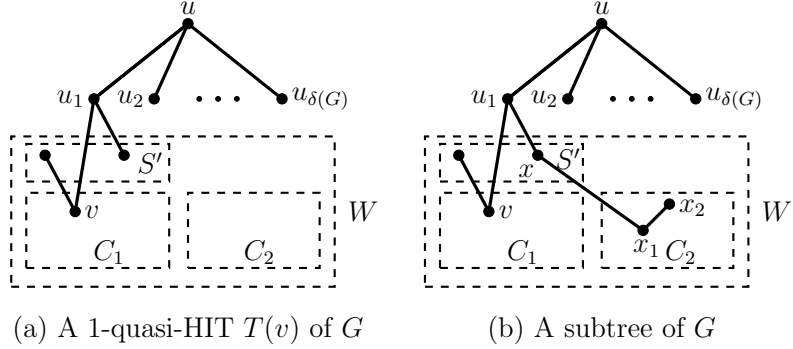
\begin{figure}[H]
			\begin{center}  
				\begin{tikzpicture}[scale=0.5]
					\tikzstyle{every node}=[font=\normalsize,scale=0.9]
					\filldraw (4.5,10)circle(0.75ex);% 黑点 u		
					\draw (4.5,10)[]node[above=0pt]{$u$};			
					\filldraw (2,8)circle(0.75ex);% 黑点 u 		
					\draw (2,8)[]node[left=0pt]{$u_1$};
					\filldraw (7,8)circle(0.75ex);% 黑点 u		
					\draw (3.6,8)[]node[left=0pt]{$u_2$};			
					\filldraw (3.6,8)circle(0.75ex);% 黑点 u
					\draw (7,8)[]node[right=0pt]{$u_{\delta(G)}$};				
					\filldraw (4.8,8)circle(0.35ex);% 黑点 u	
					\filldraw (5.3,8)circle(0.35ex);% 黑点 
					\filldraw (5.8,8)circle(0.35ex);% 黑点 		
					\filldraw(2.8,6.5)circle(0.75ex);% 黑点 u			
					\filldraw(0.7,6.5)circle(0.75ex);% 黑点 u			
					\filldraw(1.5,5)circle(0.75ex);% 黑点 u						
					\draw[very thick] (4.5,10)--(2,8);		
					\draw[very thick] (4.5,10)--(3.6,8);		
					\draw[very thick] (4.5,10)--(7,8);		
					\draw[very thick] (4.5,10)--(2,8);	  		
					\draw[very thick] (2,8)--(1.5,5);			
					\draw[very thick] (2,8)--(2.8,6.5);		
					\draw[very thick] (1.5,5)--(0.7,6.5);								
					\draw[thick, dashed](8.4,7)rectangle(-0.2,3);	 				
					\draw[thick, dashed](4,5.5)rectangle(0.2,3.5);	   	
					\draw[thick, dashed](8,5.5)rectangle(4.5,3.5);		
					\draw[thick, dashed](4,5.8)rectangle(0.2,6.8);		
					\draw (2.4,4.5)[] node[below=0pt]{$C_1$};			
					\draw (6.4,4.5)[] node[below=0pt]{$C_2$};	
					\draw (2.9,6.3)[] node[right=0pt]{$S'$};	
					\draw (1.5,5) node[right=0pt]{$v$};				
					\draw (8.5,5)[] node[right=0pt]{$W$};
					%\draw (4.5,2) node[below=0pt]{Figure~2~~Two 1-quasi-HITs of $G$ when $\delta=2$ and $u_1u_2\notin E(G)$};	
					%%%%%%%%%%%%%%%%%%%%%%%%
					\filldraw (15.5,10)circle(0.75ex);% 黑点 u		
					\draw (15.5,10)[]node[above=0pt]{$u$};	
					\filldraw (13,8)circle(0.75ex);% 黑点 u 
					\draw (13,8)[]node[left=0pt]{$u_1$};
					\filldraw (18,8)circle(0.75ex);% 黑点 u
					\draw (14.6,8)[]node[left=0pt]{$u_2$};			
					\filldraw (14.6,8)circle(0.75ex);% 黑点 u
					\draw (18,8)[]node[right=0pt]{$u_{\delta(G)}$};				
					\filldraw (15.8,8)circle(0.35ex);% 黑点 u	
					\filldraw (16.3,8)circle(0.35ex);% 黑点 
					\filldraw (16.8,8)circle(0.35ex);% 黑点 		
					\filldraw(13.8,6.5)circle(0.75ex);% 黑点
					\filldraw(16.6,4.5)circle(0.75ex);% 黑点 u	
					\filldraw(17.3,5.2)circle(0.75ex);% 黑点 u
					\draw(16.7,4.5)[]node[below=0pt]{$x_1$};
					\draw(17.3,5.1)[]node[right=0pt]{$x_2$};
					\draw[very thick] (13.8,6.5)--(16.6,4.5);
					\draw[very thick] (16.6,4.5)--(17.3,5.2);
					\filldraw(11.7,6.5)circle(0.75ex);% 黑点 u			
					\filldraw(12.5,5)circle(0.75ex);% 黑点 u						
					\draw[very thick] (15.5,10)--(13,8);		
					\draw[very thick] (15.5,10)--(14.6,8);		
					\draw[very thick] (15.5,10)--(18,8);		
					\draw[very thick] (15.5,10)--(13,8);	  		
					\draw[very thick] (13,8)--(12.5,5);			
					\draw[very thick] (13,8)--(13.8,6.5);		
					\draw[very thick] (12.5,5)--(11.7,6.5);								
					\draw[thick, dashed](19.4,7)rectangle(10.8,3);	 				
					\draw[thick, dashed](15,5.5)rectangle(11.2,3.5);	   	
					\draw[thick, dashed](19,5.5)rectangle(15.5,3.5);		
					\draw[thick, dashed](15,5.8)rectangle(11.2,6.8);		
					\draw (13.4,4.5)[] node[below=0pt]{$C_1$};			
					\draw (17.6,4.5)[] node[below=0pt]{$C_2$};	
					\draw (14,6.4)[] node[right=0pt]{$S'$};	
					\draw (13.5,6.5)[] node[below=0pt]{$x$};	
					\draw (12.5,5) node[right=0pt]{$v$};				
					\draw (19.5,5)[] node[right=0pt]{$W$};
					\draw (4.5,2.5) node[below=0pt]{(a)~A 1-quasi-HIT $T(v)$ of $G$};	
					\draw (15.5,2.5) node[below=0pt]{(b)~A subtree of $G$};	
				\end{tikzpicture}
			\end{center}	
			\caption{Two subtrees of $G$}
			\label{f3}
		\end{figure}

		\noindent {\bf Case 2}: 
		%$G[W\backslash S']$ is connected.
		$G[W\backslash S']$ contains exactly two components $C_1$ and $C_2$ (see Figure~\ref{f3} (a)).
		
		Since $|S'|+\delta(G)<\frac{n+1}4$, Claim A below follows directly from Lemma~\ref{le2.10} (iii). 
		
		\noindent {\bf Claim A}: For any subset $S_0$ of $C_i$, where $i\in [2]$, if $|S_0|\le 2$, then $C_i-S_0$ contains a HIST.  
		
		Clearly, $v\in C_1\cup C_2$. 
		Assume that $v\in C_1$. 
		Since $G[W]$ is connected and $|S'|\geq1$, there exists some $x\in S'$ with $|N_{C_2}(x)|\geq1$. 
		
		%Note that $C_1$ has a HIST $T_1$. 

		\noindent {\bf Subcase 2.1}: 
		$|N_{C_2}(x)|=|C_2|$.
		
		Then $T(v)$ can be extended 
		to a 1-quasi-HIT $T'(v)$ with edge set  $E(T(v))\cup E(x,C_2)$ and $d_{T'(v)}(v)=2$. Note that $C_1\cap V(T'(v))=\{v\}$ and $C_1\cup V(T'(v))=V(G)$.
		By Claim A, $C_1$ has a HIST $T_1$.  Then, $E(T'(v))\cup E(T_1)$ induces 
		a HIST of $G$ 
		by Lemma~\ref{le2.11}. 
		
		Now assume that $1\leq |N_{C_2}(x)|<|C_2|$. 
		By Lemma~\ref{le2.4}, there exist $x_1,x_2\in C_2$ such that $xx_1,x_1x_2\in E(G)$ and $xx_2\notin E(G)$ (see Figure~\ref{f3} (b)). 
		
		\noindent {\bf Subcase 2.2}: 
		$|N_{C_2}(x)|=1$.
		
By (\ref{e4}),
\eqn{le2.13-e1}
{
|N_{C_1}(x)|&=&|N_W(x)|-|N_{C_2}(x)|-|N_{S'}(x)|
			%\\&\geg&
			\ge 
	\left(\frac{n-1}{2}
	-\delta(G)\right)
	-1-\left(|S'|-1
	\right)
\nonumber \\
	&=&
		\frac{n-1}2-\delta(G)-|S'|%\\&>&
		>	\frac{n-1}{2}-\frac{n+1}{4}
		%\\&=&
		=\frac{n-3}{4}\geq2,
	}
		where the second last inequality follows from the condition that $|S'|+\delta(G)<\frac{n+1}4$.
		
		Since $	|N_{C_1}(x)|\ge 2$, 
		there exists $x_3\in  N_{C_1}(x)\backslash \{v\}$. 
		Then $G$ has a 2-quasi-HIT $T(v,x_1)$ with edge set  $E(T(v))\cup\{xx_1,xx_3,x_1x_2\}$. 
		By Claim A, 
			both $C_1-x_3$ and $C_2-x_2$ have HISTs. 
		Note that $(C_1\backslash\{x_3\})\cap V(T(v,x_1))=\{v\}$, $(C_2\backslash\{x_2\})\cap V(T(v,x_1))=\{x_1\}$ and $(C_1\backslash\{x_3\})\cup(C_2\backslash\{x_2\})\cup V(T(v,x_1))=V(G)$. By Lemma~\ref{le2.12}, $G$ has a HIST $T$. 
		
		\noindent {\bf Subcase 2.3}: 
		$2\leq |N_{C_2}(x)|<|C_2|$.
		
		Choose any vertex $x_4$ in $ N_{C_2}(x)\backslash\{x_1\}$.
		Then $x_4\neq x_2$ as $xx_2\notin E(G)$. Thus $G$ has a 2-quasi-HIT $T(v,x_1)$ with $E(T(v,x_1))=E(T(v))\cup\{xx_1,xx_4,x_1x_2\}$. 
		By Claim A, 
			both $C_1$ and $C_2-\{x_2,x_4\}$ contain HISTs. 
		Note that $C_1\cap V(T(v,x_1))=\{v\}$, $(C_2\backslash\{x_2,x_4\})\cap V(T(v,x_1))=\{x_1\}$ and $C_1\cup (C_2\backslash\{x_2,x_4\})\cup V(T(v,x_1))=V(G)$. Then by Lemma~\ref{le2.12}, $G$ has a HIST $T$.  
	\end{proof}

	%\newpage 
	
	\begin{lemma}\label{le2.14}
		Assume that $n\geq 143$,  $\delta(G)<\frac{n+1}{4}$,  and 
		$G[W]$ contains exactly two components $C_1$ and $C_2$. 
		For each $j\in [2]$ and $u_l\in N(u)$, 
			$C_j+u_l$ has a spanning tree $T$ 
			with the property $d_T(u_l)\ge \min\{2,
			|N_{C_j}(u_l)|\}$ in which 
			only $u_l$ may be of degree $2$.

		\iffalse 
		\begin{enumerate}
			\item[(i)]  if $|N_{C_j}(u_l)|=1$ or $|N_{C_j}(u_l)|=|C_j|$, then 
			\blue{$C_j+u_l$ has spanning tree $T_j$
				which is a HIST of $C_j+u_l$ 
				when $|C_j|\ne 2$, or a 
				1-quasi-HIST of $C_j+u_l$  
				with $d_{T_j}(u_l)=2$ when $|C_j|= 2$;  and }
			
			\item[(ii)] if $1<|N_{C_j}(u_l)|<|C_j|$, %{or $|N_{C_j}(u_l)|=\red{|C_j|= 2}$} 
			then $C_j+u_l$ has a 1-quasi-HIST $T_j(u_l)$.
		\end{enumerate}\fi 
	\end{lemma}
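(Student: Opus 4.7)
I will split on $k:=|N_{C_j}(u_l)|$, observing that $k=0$ would make $C_j+u_l$ disconnected (so the statement is vacuous), and hence assume $k\ge 1$. Throughout I will use (\ref{e4}) and Lemma~\ref{le2.10}(i), which give $d_{C_j}(v)>\frac{n-3}{4}$ and $|C_j|\le \frac{n-3}{2}$ for every $v\in C_j$; for $n\ge 143$ these imply $d_{C_j}(v)\ge 36$. The unifying idea, modelled on the proof of Lemma~\ref{le2.10}(iii), is: delete two neighbors of a designated vertex from $C_j$, take a HIST of the residual graph, and re-attach the deleted pair as leaves of the designated vertex so that its tree-degree becomes at least $3$.

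For Case~1 ($k=1$, unique neighbor $x$), I will pick distinct $y_1,y_2\in N_{C_j}(x)$, apply Lemma~\ref{le2.10}(iii) with $S_j=\{y_1,y_2\}$ to get a HIST $T_0$ of $C_j-\{y_1,y_2\}$, and set $T:=T_0\cup\{xy_1,xy_2,u_lx\}$. A short check then shows that $T$ is a HIST of $C_j+u_l$ with $d_T(u_l)=d_T(y_1)=d_T(y_2)=1$ and $d_T(x)=d_{T_0}(x)+3\ge 4$ (using $d_{T_0}(x)\ne 2$), the remaining degrees being inherited from $T_0$; this meets the required conclusion, since $\min\{2,1\}=1$.

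For Case~2 ($k\ge 2$, with two distinct neighbors $x_1,x_2$), I will aim for $d_T(u_l)=2$. The central sub-task is to construct a HIST $T_0$ of $C_j-x_1$ with $d_{T_0}(x_2)\ge 3$; given such $T_0$, the tree $T:=T_0\cup\{u_lx_1,u_lx_2\}$ satisfies $d_T(u_l)=2$, $d_T(x_1)=1$, $d_T(x_2)=d_{T_0}(x_2)+1\ge 4$, and all other degrees are inherited from $T_0$, so $u_l$ is the only degree-$2$ vertex. To build $T_0$, I will pick distinct $y_1,y_2\in N_{C_j-x_1}(x_2)$ (they exist because $d_{C_j}(x_2)\ge 36$), apply Theorem~\ref{Thm1.2} to $C_j-\{x_1,y_1,y_2\}$ to obtain a HIST $T_{00}$, and then set $T_0:=T_{00}\cup\{x_2y_1,x_2y_2\}$; the two hypotheses of Theorem~\ref{Thm1.2}---connectedness of $C_j-\{x_1,y_1,y_2\}$ and $\delta\ge 4\sqrt{|V|}$---both follow from $\delta(G)<\frac{n+1}{4}$ by the same kind of size-versus-degree counting used in the proof of Lemma~\ref{le2.10}(iii).

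The main obstacle is precisely this Case~2 step: my deletion has size $3$, one beyond the $|S_j|\le 2$ scope of Lemma~\ref{le2.10}(iii). The clean algebraic form $(n-15)^2\ge 128(n-9)$ that would make $\delta(C_j-\{x_1,y_1,y_2\})\ge 4\sqrt{|C_j-\{x_1,y_1,y_2\}|}$ immediate only holds for $n\ge 149$, so in the range $143\le n\le 148$ I need to tighten the bound using the integer constraint $\delta(G)\le \lfloor(n+1)/4\rfloor$ (with the extra $-1$ when $4\mid n+1$) to push $\delta(C_j)$ up by one; for the single extremal configuration near $n=149$ in which $C_j-\{x_1,y_1,y_2\}$ could split into two cliques whose vertices are all adjacent to $\{x_1,y_1,y_2\}$, a direct double-star construction on $C_j-x_1$ (with $x_2$ and one of $y_1,y_2$ as hubs) supplies $T_0$ explicitly.
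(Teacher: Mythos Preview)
Your outline is sound and your Case~1 works, but in Case~2 you are making life harder than necessary. The paper avoids your ``main obstacle'' entirely by a different choice of anchor vertex. Instead of fixing two neighbours $x_1,x_2$ of $u_l$ and then deleting \emph{three} vertices $\{x_1,y_1,y_2\}$ to force $d_{T_0}(x_2)\ge 3$, the paper first applies Lemma~\ref{le2.4} (valid whenever $1\le |N_{C_j}(u_l)|<|C_j|$) to find an induced path $u_l x_1 x_2$ with $x_1\in N_{C_j}(u_l)$ and $x_2\notin N_{C_j}(u_l)$. Then, picking any $x_3\in N_{C_j}(u_l)\setminus\{x_1\}$ (automatically $x_3\ne x_2$), one deletes only $\{x_2,x_3\}$, takes a HIST $T_1$ of $C_j-\{x_2,x_3\}$ via Lemma~\ref{le2.10}(iii), and sets $T=T_1\cup\{x_1x_2,u_lx_1,u_lx_3\}$. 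Here $x_1$ remains inside $T_1$, so $d_T(x_1)=d_{T_1}(x_1)+2\ge 3$, while $d_T(u_l)=2$ and $x_2,x_3$ are leaves. The separate case $|N_{C_j}(u_l)|=|C_j|$ is handled by the star $E(u_l,C_j)$. Thus only two deletions are ever needed, Lemma~\ref{le2.10}(iii) applies verbatim, and no tightening for $143\le n\le 148$ is required.

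Your route can be pushed through, but the $|S_j|=3$ step misses the connectedness bound in the paper's counting by exactly one unit (compare $\frac{n-15}{2}$ against $\frac{n-13}{2}$), so you genuinely need the integer refinements you sketch; moreover your fallback ``double-star'' for the extremal split is not justified as stated, since nothing forces every vertex of $C_j-x_1$ to be adjacent to $x_2$ or to a chosen $y_i$. The paper's Lemma~\ref{le2.4} trick sidesteps all of this: by arranging that the vertex you keep ($x_1$) already has a private non-$u_l$-neighbour ($x_2$) to re-attach, you get the degree boost for free and stay within the two-deletion budget.
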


	%	\red{revise the proof **** (revised)}
	
	\begin{proof}
		Without loss of generality,
		assume that $j=1$ and $l=1$. 
		Since $\delta(G)<\frac{n+1}4$,
		by Lemma~\ref{le2.10} (i), we have $|C_1|\ge\frac{n+1}2-\delta(G)>\frac{n+1}4>4$.
		
		If $|N_{C_1}(u_1)|=|C_1|$, then $C_1+u_1$ has a spanning tree $T$ with edge set $E(u_1,C_1)$ and $d_T(u_1)=|N_{C_1}(u_1)|$, as shown in Figure~\ref{f4} (c). Notice that 
		$T$ satisfies the above property.
		
		In the following, we assume that $1\le |N_{C_1}(u_1)|<|C_1|$.
		By Lemma~\ref{le2.4}, there exist $x_1,x_2\in C_1$ such that $u_1x_1,x_1x_2\in E(G)$ and $u_1x_2\notin E(G)$, as shown in Figures~\ref{f4} (a) and \ref{f4} (b).

		Note that $\delta(G)<\frac{n+1}4$, 
		then Claim B below follows directly from Lemma~\ref{le2.10} (iii).

		\noindent {\bf Claim B}: 
		For any subset $S_0$ of $C_1$, if $|S_0|\le 2$, $C_1-S_0$ contains a HIST.
		
		If $|N_{C_1}(u_1)|=1$, by Claim B, $C_1-x_2$ has a HIST $T_0$, implying that $C_1+u_1$ has a spanning tree $T$ with edge set $E(T_0)\cup\{x_1u_1,x_1x_2\}$ and $d_T(u_1)=|N_{C_1}(u_1)|$. Obviously, $T$ satisfies the above property.

		If $2\leq |N_{C_1}(u_1)|<|C_1|$, choose any vertex $x_3\in N_{C_1}(u_1)\backslash\{x_1\}$.
		Clearly,  $x_3\neq x_2$,
		as shown in Figure 4 (b). By Claim B, $C_1-\{x_2,x_3\}$ has a HIST $T_1$, implying that $C_1+u_1$ has a spanning tree $T$ with edge set $E(T_1)\cup\{x_1x_2,u_1x_1,u_1x_3\}$ and $d_T(u_1)=2$. Note that $T$ satisfies the above property.
	\end{proof}
	
		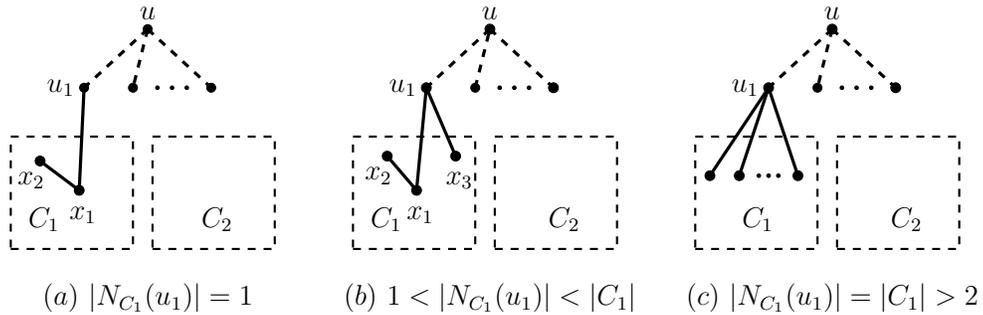
\begin{figure}[H]
		\begin{center}  
			\begin{tikzpicture}[scale=0.65]
				\tikzstyle{every node}=[font=\normalsize,scale=0.9]
				\draw (-5,9.5) node[above=0pt]
				{$u$};
				\draw (-6.3,8.3)node[left=0pt]{$u_1$};

				\draw [](-6.8,5.7) node[right=0pt]{$x_1$};
				
				\draw [](-6.9,6.4) node[left=0pt]{$x_2$};
				
				\draw (-4.1,5.6)[] node[right=0pt]{$C_2$};	
				\draw (-6.6,5.6)[] node[left=0pt]{$C_1$};	
				
				\filldraw (-5,9.5)circle(0.5ex);% 黑点 u	
				
				\filldraw (-6.3,8.3)circle(0.5ex);% 黑点 u
				\filldraw (-5.3,8.3)circle(0.5ex);% 黑点 u
				\filldraw (-3.7,8.3)circle(0.5ex);% 黑点 u
				
				\filldraw (-4.2,8.3)circle(0.2ex);% 黑点 u
				\filldraw (-4.5,8.3)circle(0.2ex);% 黑点 u
				\filldraw (-4.8,8.3)circle(0.2ex);% 黑点 u

				\filldraw (-6.4,6.2)circle(0.55ex);% 黑点 u    
				\filldraw (-7.2,6.8)circle(0.55ex);% 黑点 u	    
				
				\draw[very thick, dashed] (-5,9.5)--(-5.3,8.3);		
				\draw[very thick, dashed] (-5,9.5)--(-6.3,8.3);		
				\draw[very thick, dashed] (-5,9.5)--(-3.7,8.3);		
				\draw[very thick] (-6.4,6.2)--(-6.3,8.3);	
				\draw[very thick] (-6.4,6.2)--(-7.2,6.8);	
				\draw[thick, dashed] (-7.8,7.3)rectangle(-5.3,5);
				
				\draw[thick, dashed] (-2.4,7.3)rectangle(-4.9,5);
				\draw (-5,4) node[]{$(a)$~$|N_{C_1}(u_1)|=1$};
				~~~~~~~~~~~~~~~~~~~~~~~~~~~~~~~~~~~~~~~~~~~~~~~~
				\draw (2,9.5) node[above=0pt]{$u$};
				\draw (0.7,8.3) node[left=0pt]{$u_1$};
				
				\draw [](0.1,5.7) node[right=0pt]{$x_1$};
				\draw [](0.9,6.4) node[right=0pt]{$x_3$};
				
				\draw [](0.2,6.5) node[left=0pt]{$x_2$};
				
				\draw (3,5.6)[] node[right=0pt]{$C_2$};	
				\draw (0.4,5.6)[] node[left=0pt]{$C_1$};	
				
				\filldraw (2,9.5)circle(0.55ex);% 黑点 u	
				
				\filldraw (0.5,6.2)circle(0.55ex);% 黑点 u  
				
				\filldraw (-0.1,6.9)circle(0.55ex);% 黑点 u
				
				\filldraw (1.3,6.9)circle(0.55ex);% 黑点 u	 
				
				\filldraw (3.3,8.3)circle(0.55ex);% 黑点 u 
				\filldraw (0.7,8.3)circle(0.55ex);% 黑点 u	
				\filldraw (1.7,8.3)circle(0.5ex);% 黑点 u

				\filldraw (2.2,8.3)circle(0.2ex);% 黑点 u
				\filldraw (2.5,8.3)circle(0.2ex);% 黑点 u
				\filldraw (2.8,8.3)circle(0.2ex);% 黑点 u

				\draw[very thick, dashed] (2,9.5)--(0.7,8.3);
				\draw[very thick, dashed] (2,9.5)--(3.3,8.3);			
				\draw[very thick, dashed](1.7,8.3)--(2,9.5);

				\draw[very thick] (1.3,6.9)--(0.7,8.3);
				\draw[very thick] (0.5,6.2)--(0.7,8.3);
				\draw[very thick] (0.5,6.2)--(-0.1,6.9);

				\draw[thick, dashed] (-0.8,7.3)rectangle(1.7,5);
				
				\draw[thick, dashed] (4.6,7.3)rectangle(2.1,5);		
				
				\draw (2,4) node[]{$(b)$~$1<|N_{C_1}(u_1)|<|C_1|$};
				~~~~~~~~~~~~~~~~~~~~~~~~~~~~~~~~~~~~~~~~~~~~~~~~~~~~~~~~~~~~~
				
				\draw (9,9.5) node[above=0pt]{$u$};
				\draw (7.7,8.3) node[left=0pt]{$u_1$};
				
				\filldraw (9,9.5)circle(0.55ex);% 黑点 u			
				
				\filldraw (10.3,8.3)circle(0.55ex);% 黑点 u 
				\filldraw (7.7,8.3)circle(0.55ex);% 黑点 u			
				\filldraw (8.7,8.3)circle(0.5ex);% 黑点 u
				
				\filldraw (6.5,6.5)circle(0.55ex);% 黑点 u	
				\filldraw (7.1,6.5)circle(0.55ex);% 黑点 u	
				\filldraw (8.3,6.5)circle(0.55ex);% 黑点 u	
				
				\filldraw (7.5,6.5)circle(0.2ex);% 黑点 u
				\filldraw (7.7,6.5)circle(0.2ex);% 黑点 u
				\filldraw (7.9,6.5)circle(0.2ex);% 黑点 u	
				
				\filldraw (9.2,8.3)circle(0.2ex);% 黑点 u
				\filldraw (9.5,8.3)circle(0.2ex);% 黑点 u
				\filldraw (9.8,8.3)circle(0.2ex);% 黑点 u		
				
				\draw[very thick, dashed] (9,9.5)--(7.7,8.3);
				\draw[very thick, dashed] (9,9.5)--(10.3,8.3);			
				\draw[very thick, dashed] (9,9.5)--(8.7,8.3);
				
				\draw[very thick] (7.7,8.3)--(6.5,6.5);
				\draw[very thick] (7.7,8.3)--(7.1,6.5);
				\draw[very thick] (7.7,8.3)--(8.3,6.5);
				\draw[thick, dashed] (6.2,7.3)rectangle(8.7,5);
				\draw[thick, dashed] (11.6,7.3)rectangle(9.1,5);	
				\draw (9,4) node[]{$(c)$~$|N_{C_1}(u_1)|=|C_1|>2$};
				\draw (10,5.6)[] node[right=0pt]{$C_2$};	
				\draw (8,5.6)[] node[left=0pt]{$C_1$};	
			\end{tikzpicture}
		\end{center}	
		\caption{Three subtrees of $G$}
		\label{f4}
	\end{figure}
	
	\section{Proof of Theorem~\ref{Thm1.5}}	
	In this section, we will prove Theorem~\ref{Thm1.5}. 
	Let $G$ be a connected graph 
	of order $n$.
	By Theorem~\ref{Thm1.2},
	if $\delta(G)\ge 4\sqrt n$, 
	then $G$ has a HIST.
	Thus, by Corollary~\ref{cor1}, 
	in order to prove 
	Theorem~\ref{Thm1.5}, 
	it suffices to establish the following 
	conclusion. 
	
	\begin{prop}\label{prop5-1}
		Let $G$ be a connected graph of order $n\ge 270$ with 
		$\delta(G)<4\sqrt n$
		and $NC(G)\ge \frac{n-1}2$.
		Assume that 
		$G$ is not isomorphic to any $H_i$, $i\in [3]$, 
		shown in Figure~\ref{f1}, 
		nor a graph with  a pendant vertex which is adjacent to a vertex of degree $2$.
		Then, 
		$G$ contains a HIST. 
	\end{prop}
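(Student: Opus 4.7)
The plan is to pick a vertex $u$ of minimum degree $\delta=\delta(G)<4\sqrt{n}$, set $N(u)=\{u_1,\ldots,u_\delta\}$ and $W=V(G)\setminus N[u]$, and exploit the full machinery of Sections~3 and~4. Since $n\ge 270$, a routine computation gives $4\sqrt{n}<\tfrac{n+1}{4}$, so the hypothesis $\delta<\tfrac{n+1}{4}$ required in Lemmas~\ref{le2.10} and~\ref{le2.14} is automatic. Applying Lemma~\ref{le2.9} with $S=\emptyset$ shows that $G[W]$ has at most two components, which gives the top-level case split. In each case the goal is to produce either a HIST directly, or a $1$- or $2$-quasi-HIT on a controlled vertex set which can then be completed via Lemmas~\ref{le2.11}, \ref{le2.12}, or~\ref{le2.13}. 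Throughout, the two ``boundary'' parameters to watch are the minimum-degree vertex $u$ and the clique $\{u_i:|N_W(u_i)|\le 1\}$ supplied by Lemma~\ref{le2.8}, which concentrates all pathological behavior into a small set.

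In the case where $G[W]$ is connected, the bound $\delta(G[W])\ge\tfrac{n-1}{2}-\delta$ from~(\ref{e4}) together with $\delta<4\sqrt n$ and $n\ge 270$ yields $\delta(G[W])>4\sqrt{|W|}$, so Theorem~\ref{Thm1.2} gives a HIST of $G[W]$. The task then reduces to attaching $u$ and $N(u)$ so that no vertex has degree~$2$. The plan is to construct a $1$-quasi-HIT $T(v)$ with $V(T(v))=N[u]\cup S$ for a small set $S\subset W$ of size $2\le|S|<\tfrac{n+5}{4}-\delta$ in which $v\in S$ is the only degree-$2$ vertex, and then invoke Lemma~\ref{le2.13}. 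To build $T(v)$, each $u_i$ with $|N_W(u_i)|\ge 2$ is made internal by attaching two neighbors in $W$; each $u_i$ with $|N_W(u_i)|\le 1$ is either a leaf of the star at $u$ or is joined inside the clique $\{u_i:|N_W(u_i)|\le 1\}$ (Lemma~\ref{le2.8}), arranging that exactly one vertex of $S$ ends up of degree~$2$. The only way this construction cannot be tuned to produce such a tree is when $\delta$ is very small (at most~$2$), and in that regime the excluded hypothesis ``no pendant vertex adjacent to a degree-$2$ vertex'' together with the neighborhood union condition quickly forces a usable configuration.

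The two-component case, where $G[W]=C_1\cup C_2$, is where the exceptional graphs $H_1,H_2,H_3$ arise, and is the main obstacle. Lemma~\ref{le2.14} supplies, for every $u_l\in N(u)$ adjacent to $C_j$, a spanning tree of $C_j+u_l$ in which only $u_l$ may have degree~$2$. If some $u_l$ is adjacent to both $C_1$ and $C_2$, the two such trees glue at $u_l$ to a tree on $\{u_l\}\cup C_1\cup C_2$ with $d(u_l)\ge 4$, which is completed to a HIST of $G$ by appending $u$ and handling the remaining $u_i$'s either as leaves of the star at $u$ or via the clique from Lemma~\ref{le2.8}; the only obstruction here is when $\delta=2$ with $u$ a cut-vertex, giving $H_1$. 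If no $u_l\in N(u)$ is adjacent to both $C_1$ and $C_2$, pick $u_a$ adjacent to $C_1$, $u_b$ adjacent to $C_2$ and bridge through $u$ or through an edge $u_au_b\in E(G)$ together with a third vertex $u_c$ to avoid a degree-$2$ vertex on the bridge. The hard part is to show that whenever this bridging is impossible, the size bounds $|C_i|\ge\tfrac{n+1}{2}-\delta$ (Lemma~\ref{le2.10}(i)) and the neighborhood union condition force $N(u)\cup\{u\}$ to span a triangle whose every vertex has degree $\le 3$, which by Proposition~\ref{Prop2.3} coincides with $G\cong H_2$ or $G\cong H_3$ (or $G\cong H_1$ in the degenerate case $\delta=2$); this structural identification is the crux of the proof and is where the precise constant $n\ge 270$ is consumed.
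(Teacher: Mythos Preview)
Your high-level architecture matches the paper's --- fix $u$ of minimum degree, split on whether $G[W]$ is connected or has two components, and route through Lemma~\ref{le2.13} or Lemma~\ref{le2.14} accordingly --- but both branches have real gaps. In the connected case, attaching two $W$-neighbours to \emph{every} $u_i$ with $|N_W(u_i)|\ge 2$ could give $|S|$ of order $2\delta$, whereas Lemma~\ref{le2.13} requires $|S|<\tfrac{n+5}{4}-\delta$; for $\delta$ near $4\sqrt{n}$ and $n$ near $270$ this bound is barely above $3$, so at most three vertices of $W$ may be used. The paper instead routes into $W$ through a \emph{single} $u_i$ (the one maximizing $|U_i|$), keeping all other $u_j$ as leaves of the star at $u$. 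This in turn forces separate arguments when that maximum equals~$1$ (re-root the star at $u_1$ using the clique from Lemma~\ref{le2.8}) and when $\delta=2$ with $u_1u_2\notin E(G)$ (build a short $(U_1,U_2)$-path inside $W$ so that $u_2$ is not stranded at degree~$2$); neither of these is a quick consequence of the excluded-pendant hypothesis.

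In the two-component case you mislocate the exceptions. When some $u_l$ meets both $C_1$ and $C_2$ there is \emph{never} an obstruction: for $\delta\ge 3$ the tree on $E(u,N(u))\cup E(T_1)\cup E(T_2)$ is already a HIST (your claim $d_T(u_l)\ge 4$ is false in general, but $d_T(u_l)\ge 3$ holds once the edge $uu_l$ is included, and that suffices), and for $\delta=2$ one first shows via Lemma~\ref{le2.10}(ii) that each $C_i$ is a clique of size $\tfrac{n-3}{2}$ and then builds the HIST explicitly. All of $H_1,H_2,H_3$ arise only when \emph{no} $u_l$ bridges the two components, and isolating them requires a detailed analysis for $\delta\in\{2,3\}$ that pins down $|C_i|$ exactly, proves each $C_i$ (and sometimes $C_i+u_j$) is a clique, and tracks the location of every neighbour of $u_2$ and $u_3$. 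Most subcases of the $\delta=3$ analysis end with an explicit HIST, and only one residual configuration is identified as $H_3$; a single appeal to Proposition~\ref{Prop2.3} does not cover this work.
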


	From now on,  we assume that 
	$G$ is a graph satisfying the given 
	conditions in Proposition~\ref{prop5-1}. 
	Simplify write $\delta$ for $\delta(G)$. 
	Assume that $u$ is a vertex of $G$ with $d(u)=\delta$, $N(u)=\{u_i: i\in [\delta]\}$ and $W=V(G)\backslash  N[u]$. Then $W\neq\emptyset$ and $E(N(u),W)\neq \emptyset$ as $G$ is connected. 
	Let $U_i=N_W(u_i)$ for each $i\in [\delta]$.
	The proof of Proposition~\ref{prop5-1} will be completed in the subsections below.

\subsection{$G[W]$ is connected}

In this subsection, assume that $|U_1|=\max\limits_{i\in[\delta]}|U_i|$.
Then $|U_1|\geq 1$ as $E(N(u),W)\neq\emptyset$.  

\subsubsection{$\delta\ne2$ or $u_1u_2\in E(G)$}
Since $\delta<4\sqrt{n}$, $|W|=n-1-\delta>n-1-4\sqrt{n}>203$.
We first consider the case $|U_1|=|W|$. If $\delta\neq2$, then $G$ has a HIST with edge set $E(u,N(u))\cup E(u_1,U_1)$. Otherwise, $\delta=2$ and $u_1u_2\in E(G)$, then $G$ has a HIST with edge set  $E(u_1,N(u_1))$.

Now assume that $1\leq |U_1|<|W|$. By Lemma~\ref{le2.4}, there are two vertices $x_1,x_2\in W$ 
such that $u_1x_1,x_1x_2\in E(G)$ and $u_1x_2\notin E(G)$. 

If $|U_1|=1$, then by assumption, $\delta\geq2$ and $|U_i|\leq1$ for all $i\in[\delta]\backslash\{1\}$. By Lemma~\ref{le2.8}, $N(u)$ is a clique. Thus, $G$ has a 1-quasi-HIT $T(x_1)$ with edge set $E(u_1,N[u]\backslash\{u_1\})\cup\{u_1x_1,x_1x_2\}$ and vertex set $N[u]\cup S$, where $S=\{x_1,x_2\}$.
Observe that 
	$$
	|S|=2<\frac{n+5}{4}-\frac{n-7}{4}\leq\frac{n+5}{4}-4\sqrt{n}<\frac{n+5}{4}-\delta. 
	$$
Since $G[W]$ is connected, by Lemma~\ref{le2.13}, $G$ has a HIST.
	
If $2\leq |U_1|<|W|$, then there exists $x_3\in U_1\backslash\{x_1\}$. Clearly,   $x_3\neq x_2$. Then, $G$ has a 1-quasi-HIT $T(x_1)$ with edge set $E(u,N(u))\cup\{u_1x_1, x_1x_2, u_1x_3\}$ and vertex set $N[u]\cup S$, where $S=\{x_1,x_2,x_3\}$. Observe that 
$$
|S|=3<\frac{n+5}{4}-\frac{n-7}{4}\leq\frac{n+5}{4}-4\sqrt{n}<\frac{n+5}{4}-\delta. 
$$
Since $G[W]$ is connected, by Lemma~\ref{le2.13}, $G$ has a HIST.
	
Hence Proposition~\ref{prop5-1} holds when $G[W]$ is connected and 
either $\delta\ne 2$ or $u_1u_2\in E(G)$.

\subsubsection{$\delta=2$ and $u_1u_2\notin E(G)$}

Note that $U_1,U_2\neq\emptyset$
and  $|U_1|\geq|U_2|$.
Since $u_1u_2\notin E(G)$,
%and $U_1\cap U_2=\emptyset$, 
by $(\ref{Th1.5-e1})$, we have
\eqn{e12}
{|U_1|+|U_2|\ge |U_1\cup U_2|
	=|N(u_1)\cup N(u_2)|-1\geq
	%\frac{n-1}2-1=
	\frac{n-3}2>133.
}

We first consider the case
$U_1\cap U_2\ne \emptyset$.
Let $v_1\in U_1\cap U_2$.
By (\ref{e12}), $|U_1|\ge 2$. 
Let $u'_1\in U_1\backslash \{v_1\}$.
Then $G$ has a 1-quasi-HIT $T(v_1)$ with edge set  $\{uu_1,u_1v_1,u_1u_1',u_2v_1\}$
and vertex set $N[u]\cup S'$,
where $S'=\{v_1,u_1'\}$,
as shown in Figure~\ref{f5} (a).
Observe that 
$$|S'|=2<\frac{n+5}{4}-2
=\frac{n+5}{4}-\delta.$$ 
Since $G[W]$ is connected, by Lemma~\ref{le2.13}, $G$ has a HIST.

%%here

\begin{figure}[H]
	\begin{center}  
		\begin{tikzpicture}[scale=0.76]
			\tikzstyle{every node}=[font=\normalsize,scale=0.9]
			\draw (-5,9.5) node[above=0pt]{$u$};
			\draw (-6.3,8.3)node[left=0pt]{$u_1$};
			\draw (-3.7,8.3) node[right=0pt]{$u_2$};
			\draw (-7,6.5) node[below=0pt]{$u_1'$};
			\draw (-5.5,6.35) node[below=0pt]{$v_1$};		
			\draw [](-6.6,5.6) node[right=0pt]{$S'$};
			\draw (-3.2,5.6) node[right=0pt]{$W$};			
			\filldraw (-5,9.5)circle(0.55ex);% 黑点 u		
			\filldraw (-6.3,8.3)circle(0.55ex);% 黑点 u
			\filldraw (-3.7,8.3)circle(0.55ex);% 黑点 u		
			\filldraw (-7,6.5)circle(0.55ex);% 黑点 u    
			\filldraw (-5.6,6.5)circle(0.55ex);% 黑点 u	    
			\filldraw (-4.2,6.5)circle(0.55ex);% 黑点 u	    
			\filldraw (-2.6,6.5)circle(0.55ex);% 黑点 u	    
			\filldraw (-3.1,6.5)circle(0.25ex);% 黑点 u
			\filldraw (-3.4,6.5)circle(0.25ex);% 黑点 u
			\filldraw (-3.7,6.5)circle(0.25ex);%黑点u	    	
			\draw[very thick] (-5,9.5)--(-6.3,8.3);		
			\draw[very thick,dashed] (-5,9.5)--(-3.7,8.3);		
			\draw[very thick] (-7,6.5)--(-6.3,8.3);		
			\draw[very thick] (-5.6,6.5)--(-6.3,8.3);		
			\draw[very thick] (-5.6,6.5)--(-3.7,8.3);
			\draw[thick, dashed] (-7.8,5)rectangle(-2.2,7.3);
			\draw[thick, dashed] (-7.5,5.3)rectangle(-5,7);
			\draw (-5,4.3) node[]{(a)~$v_1\in U_1\cap U_2$};
			~~~~~~~~~~~~~~~~~~~~~~~~~~~~~~~~~~~~~~~~~~~~~~~~
			\draw (3,9.5) node[above=0pt]{$u$};
			\draw (1.7,8.3) node[left=0pt]{$u_1$};
			\draw (4.3,8.3) node[right=0pt]{$u_2$};
			\draw (0.7,6.7) node[below=0pt]{$u_2'$};
			\draw (1.9,6.3) node[left=0pt]{$v_1$};	
			\draw (3.6,6.5) node[below=0pt]{$v_{k-1}$};
			\draw (4.3,6.5) node[below=0pt]{$v_k$};
			\draw (1.2,5.9) node[below=0pt]{$v_1'$};
			\draw (2.7,5.9) node[below=0pt]{$v_{k-1}'$};	
			\draw [](3.7,5.6) node[right=0pt]{$S'$};
			\draw (5.4,5.6) node[right=0pt]{$W$};
			\filldraw (3,9.5)circle(0.55ex);% 黑点 u				
			\filldraw (4.3,8.3)circle(0.55ex);% 黑点 u 
			\filldraw (1.7,8.3)circle(0.55ex);% 黑点 u			
			\filldraw (1,6.5)circle(0.55ex);% 黑点 u				
			\filldraw (1.9,6.5)circle(0.55ex);% 黑点 u	
			\filldraw (3.2,6.5)circle(0.55ex);% 黑点 u		
			\filldraw (4.1,6.5)circle(0.55ex);% 黑点 u		
			\filldraw (4.8,6.5)circle(0.55ex);% 黑点 u					
			\filldraw (6,6.5)circle(0.55ex);% 黑点 u				
			\filldraw (2.35,6.5)circle(0.25ex);% 黑点 u	
			\filldraw (2.55,6.5)circle(0.25ex);% 黑点 u	
			\filldraw (2.75,6.5)circle(0.25ex);% 黑点 u	 			
			\filldraw (5.2,6.5)circle(0.25ex);% 黑点 u	
			\filldraw (5.4,6.5)circle(0.25ex);% 黑点 u	
			\filldraw (5.6,6.5)circle(0.25ex);% 黑点 u	 			
			\filldraw (1.5,5.7)circle(0.55ex);% 黑点 u	 
			\filldraw (2.8,5.7)circle(0.55ex);% 黑点 u				
			\draw[very thick] (3,9.5)--(1.7,8.3);
			\draw[very thick,dashed] (3,9.5)--(4.3,8.3);			
			\draw[very thick] (1.7,8.3)--(1,6.5);			
			\draw[very thick] (1.7,8.3)--(1.9,6.5);			
			\draw[very thick] (2.2,6.5)--(1.9,6.5); 
			\draw[very thick] (3.2,6.5)--(2.9,6.5); 			
			\draw[very thick] (3.2,6.5)--(4.1,6.5);			
			\draw[very thick] (4.1,6.5)--(4.3,8.3);			
			\draw[very thick] (3.2,6.5)--(2.8,5.7);			
			\draw[very thick] (1.9,6.5)--(1.5,5.7);	  			
			\draw[thick, dashed] (0,5)rectangle(6.3,7.3);			
			\draw[thick, dashed](0.2,5.15)rectangle(4.6,7.1);	    
			\draw (3.2,4.3) node[]{$(b)$~a shortest $(U_1,U_2)$-path $v_1v_2\cdots v_k$};
			~~~~~~~~~~~~~~~~~~~~~~~~~~~~~~~~~~~~~~~~~~~~~~~~~~~~~~~~~~~~~~
			
			%\draw (-1,3.1) node[below=0pt]{Figure~2~~Two 1-quasi-HITs of $G$ when $\delta=2$ and $u_1u_2\notin E(G)$};	
		\end{tikzpicture}
		
		%(a) $v_1\in U_1\cap U_2$
		%\hspace{3 cm} 
		%(b) a shortest $(U_1,U_2)$-path $v_1v_2\cdots v_k$ 
	\end{center}	
	\caption{1-quasi-HITs of $G$ when $\delta=2$ and $u_1u_2\notin E(G)$}
	
	\label{f5}
\end{figure}
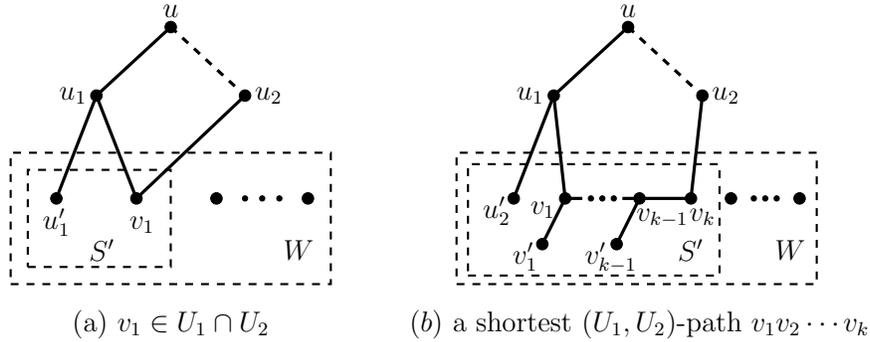

Now assume that 
$U_1\cap U_2=\emptyset$. 
Since $G[W]$ is connected, 
by Lemma~\ref{le2.5}, there is a $(U_1,U_2)$-path in $G[W]$. 
Let $P:=v_1v_2\cdots v_k$ be a shortest $(U_1,U_2)$-path in $G[W]$, where $k\geq 2$, $v_1\in U_1$
and $v_k\in U_2$. 
Clearly, 
$\{v_j: 2\le j\le k-1\}\cap 
(U_1\cup U_2)=\emptyset$. 
By (\ref{e12}), $|U_1|\ge2$, and so
there exists  $u_2'\in U_1\backslash\{v_1\}$.

We claim that $k\leq5$.
Otherwise, if $k\geq6$, then $N(v_3)\subseteq (W\backslash(U_1\cup U_2\cup P))\cup\{v_2,v_4\}$ and $N(u)=\{u_1,u_2\}$, which implies 	
\begin{eqnarray*}
	|N(u)\cup N(v_3)|&\leq&|W|-\left(|U_1|+|U_2|+k-2\right)+4
	%\\&\leq&
\	\le\ (n-3)
	-\left(\frac{n-3}{2}+6-2\right)+4
	\\ &=&
	\frac{n-3}{2}<\frac{n-1}{2},
\end{eqnarray*}
a contradiction to the condition of (\ref{Th1.5-e1}). Thus, $k\le 5$.

\iffalse 
If $k\geq2$, then $v_iv_j\notin E(G)$ for all \blue{$2\leq i+1<j\leq k$}, and $v_i\notin U_1\cup U_2$ for all $2\leq i\leq k-1$. \blue{Note that $U_1\cap U_2=\emptyset$ and $N(u_1)\cap N(u_2)=\{u\}$. Since $u_1u_2\notin E(G)$, by $(\ref{Th1.5-e1})$, we have
	\eqn{e12}
	{|U_1|+|U_2|\geq|N(u_1)\cup N(u_2)|-|N(u_1)\cap N(u_2)|\geq\frac{n-1}2-1=\frac{n-3}2.}}
\fi

For each $v_i\in P$, 
by (\ref{e4}), we have 
$$
|N_W(v_i)|=d_W(v_i)\geq\frac{n-1}{2}-\delta=\frac{n-5}{2}\geq9,
$$ 
and thus $|N_{W-P}(v_i)|=|N_W(v_i)|-(k-1)\geq5$. So there exist $v_1'\in N_{W-P}(v_1)\backslash\{u_2'\}$ and $v_i'\in N_{W-P}(v_i)\backslash\{u_2',v_1',\ldots,v_{i-1}'\}$ for all $2\leq i\leq k-1\leq4$. Let $S'=\bigcup_{i=1}^{k-1}\{v_i,v_i'\}\cup\{u_2',v_k\}$. Then, $G$ has a 1-quasi-HIT $T(v_k)$ with 
vertex set 
$N[u]\cup S'$ and edge set 
$$
E(P)\cup\{uu_1,u_1u_2',u_1v_1,u_2v_k\}\cup
\{v_jv'_j: j=1,2,\ldots,k-1\},
$$ 
as shown in Figure~\ref{f5} (b). Note that 
$$
4\leq|S'|=2(k-1)+2=2k\leq10<\frac{n+5}{4}-2=\frac{n+5}{4}-\delta.
$$ 
Since $G[W]$ is connected, by Lemma~\ref{le2.13}, $G$ has a HIST.

%	So the conclusion holds in the case $u_1u_2\notin E(G)$.

Hence Proposition~\ref{prop5-1}
holds when $G[W]$ is connected, $\delta=2$ and $u_1u_2\notin E(G)$. 
Combining the conclusions in 
Subsections 5.1.1 and 5.1.2 yields that 
Proposition~\ref{prop5-1}
holds when $G[W]$ is connected.

\subsection{$G[W]$ is disconnected}

%Let $S=\emptyset$. 
Since $\frac{n+5}{4}-\delta>\frac{n+5}4-4\sqrt{n}\geq\frac{n+5}4-\frac{n-7}4=3>0$, 
by Lemma~\ref{le2.9}, 
$G[W]$ contains at most two components. Thus,  $G[W]$ contains exactly two components, 
say $C_1$ and $C_2$. By Lemma~\ref{le2.10} (i), we have 
\eqn{e13}
{\frac{n+1}2-\delta
\leq|C_i|
\leq\frac{n-3}2.
}

We claim that $\delta\geq2$. Otherwise, if $\delta=1$, then $|W|=n-2$. By (\ref{e4}), we have
$$
\delta(G[W])\geq\frac{n-1}{2}-\delta=\frac{n-3}{2}>\frac{n-4}{2}
=\frac{|W|-2}{2}.
$$
Thus,  by Lemma~\ref{le2.6}, 
$G[W]$ is connected, a contradiction to the condition that $G[W]$ is disconnected. So in the following, we assume that $\delta\geq2$. We show that the conclusion holds 
in the following cases. 

\subsubsection{$\delta=2$}

By (\ref{e13}), we have
$|C_i|=\frac{n-3}{2}$ for both $i=1,2$.
We claim that $C_i$ is a clique for each $i\in[2]$. Otherwise, $C_i$ has two non-adjacent vertices $x$ and $y$,
implying that $|N_{N(u)}(x)\cup N_{N(u)}(y)|\geq3$ by 
Lemma~\ref{le2.10} (ii),
contradicting the fact that 
$|N(u)|=\delta=2$.
Hence each $C_i$ is a clique of 
size $\frac{n-3}2>133$. 

We first consider the case when 
there 
exists $j\in [2]$ such that 
$U_j\cap C_i\ne \emptyset$
for both $i=1,2$.
Assume that $j=1$
and 
$u_1'\in C_1\cap U_1$ and $u_1''\in C_2\cap U_1$,
as shown in Figure~\ref{f6} (a).

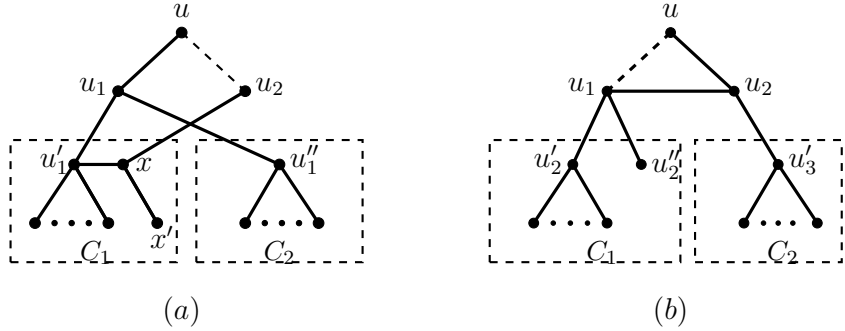
\begin{figure}[H]
	\begin{center}  
		\begin{tikzpicture}[scale=0.65]
			\tikzstyle{every node}=[font=\normalsize,scale=0.95]
			\draw (-5,9.6) node[above=0pt]{$u$};
			\draw (-6.3,8.4) node[left=0pt]{$u_1$};
			\draw (-3.7,8.4) node[right=0pt]{$u_2$};
			\draw (-7.1,6.9) node[left=0pt]{$u_1'$};
			\draw (-3,6.9) node[right=0pt]{$u_1''$};	
			\draw (-6.2,6.8) node[right=1pt]{$x$};
			\draw (-5.4,5.75) node[below=0pt]{$x'$};
			\filldraw (-5,9.5)circle(0.6ex);% 黑点 u
			\filldraw (-6.3,8.3)circle(0.6ex);% 黑点 u
			\filldraw (-3.7,8.3)circle(0.6ex);% 黑点 u
			\filldraw (-3,6.8)circle(0.6ex);% 黑点 u
			\filldraw (-6.2,6.8)circle(0.6ex);% 黑点 u
			\filldraw (-7.2,6.8)circle(0.6ex);% 黑点 u
			\filldraw (-8,5.6)circle(0.6ex);% 黑点 u
			\filldraw (-6.5,5.6)circle(0.6ex);% 黑点 u
			\filldraw (-5.5,5.6)circle(0.6ex);% 黑点 u
			\filldraw (-3.7,5.6)circle(0.6ex);% 黑点 u
			\filldraw (-2.2,5.6)circle(0.6ex);% 黑点 u
			\filldraw (-7.6,5.6)circle(0.3ex);% 黑点 u
			\filldraw (-7.25,5.6)circle(0.3ex);% 黑点 u
			\filldraw (-6.9,5.6)circle(0.3ex);% 黑点 u
			\filldraw (-2.6,5.6)circle(0.3ex);% 黑点 u
			\filldraw (-2.95,5.6)circle(0.3ex);% 黑点 u
			\filldraw (-3.3,5.6)circle(0.3ex);% 黑点 u			
			\draw[very thick] (-6.3,8.3)--(-6.3,8.3);			
			\draw[very thick] (-5,9.5)--(-6.3,8.3);
			\draw[thick,dashed] (-5,9.5)--(-3.7,8.3);
			\draw[very thick] (-6.3,8.3)--(-7.2,6.8);
			\draw[very thick] (-6.3,8.3)--(-3,6.8);
			\draw[very thick] (-3.7,8.3)--(-6.2,6.8);
			\draw[very thick] (-6.2,6.8)--(-7.2,6.8);
			\draw[very thick] (-7.2,6.8)--(-8,5.6);
			\draw[very thick] (-7.2,6.8)--(-6.5,5.6);
			\draw[very thick] (-6.2,6.8)--(-5.5,5.6);
			\draw[very thick] (-3,6.8)--(-3.7,5.6);
			\draw[very thick] (-3,6.8)--(-2.2,5.6);
			\draw[very thick] (-7.2,6.8)--(-6.5,5.6);
			\draw[very thick] (-6.2,6.8)--(-5.5,5.6);
			\draw[thick, dashed] (-8.5,4.8)rectangle(-5.1,7.3);
			\draw[thick, dashed] (-4.7,4.8)rectangle(-1.3,7.3);
			\draw[] (-3,5) node{\small$C_2$};
			\draw [](-6.75,5) node{\small$C_1$};
			\draw (-5,3.8) node[]{$(a)$};
			~~~~~~~~~~~~~~~~~~~~~~~~~~~~~~~~~~~~~~~~~~~~~~~~~~~~~~~~~~~
			\draw (5,9.6) node[above=0pt]{$u$};			
			\draw (6.3,8.4) node[right=0pt]{$u_2$};
			\draw (3.7,8.4) node[left=0pt]{$u_1$};			
			\draw (7.2,6.9) node[right=0pt]{$u_3'$};
			\draw (3,6.9) node[left=0pt]{$u_2'$};	
			\draw (4.4,6.8) node[right=0pt]{$u_2''$};	
			\filldraw (5,9.5)circle(0.55ex);% 黑点 u			
			\filldraw (6.3,8.3)circle(0.55ex);% 黑点 u
			\filldraw (3.7,8.3)circle(0.55ex);% 黑点 u			
			\filldraw (3,6.8)circle(0.55ex);% 黑点 u
			\filldraw (4.4,6.8)circle(0.55ex);% 黑点 u			
			\filldraw (7.2,6.8)circle(0.55ex);% 黑点 u			
			\filldraw (8,5.6)circle(0.55ex);% 黑点 u
			\filldraw (6.5,5.6)circle(0.55ex);% 黑点 u
			\filldraw (3.7,5.6)circle(0.55ex);% 黑点 u
			\filldraw (2.2,5.6)circle(0.55ex);% 黑点 u			
			\filldraw (7.5,5.6)circle(0.25ex);% 黑点 u
			\filldraw (7.2,5.6)circle(0.25ex);% 黑点 u
			\filldraw (6.9,5.6)circle(0.25ex);% 黑点 u			
			\filldraw (2.6,5.6)circle(0.25ex);% 黑点 u
			\filldraw (2.95,5.6)circle(0.25ex);% 黑点 u
			\filldraw (3.3,5.6)circle(0.25ex);% 黑点 u			
			\draw[very thick] (6.3,8.3)--(3.7,8.3);			
			\draw[very thick] (4.4,6.8)--(3.7,8.3);			
			\draw[very thick] (6.5,5.6)--(7.2,6.8);			
			\draw[very thick] (5,9.5)--(6.3,8.3);
			\draw[very thick,dashed] (5,9.5)--(3.7,8.3);
			\draw[very thick] (6.3,8.3)--(7.2,6.8);
			\draw[very thick] (3.7,8.3)--(3,6.8);
			\draw[very thick] (7.2,6.8)--(8,5.6);
			\draw[very thick] (3,6.8)--(3.7,5.6);
			\draw[very thick] (3,6.8)--(2.2,5.6);
			\draw[thick, dashed] (8.5,4.8)rectangle(5.5,7.3);
			\draw[thick, dashed] (5.2,4.8)rectangle(1.3,7.3);
			\draw[] (3.6,5) node{\small$C_1$};
			\draw [] (7.3,5) node{\small$C_2$};
			\draw (5,3.8) node[]{$(b)$};
		\end{tikzpicture}
	\end{center}	
	
	\caption{Two HISTs of $G$ when $\delta=2$}
	
	\label{f6}
\end{figure}

Let $x\in N(u_2)\backslash\{u\}$. 
If $x\in\{u_1,u_1',u_1''\}$, then $G$ has a HIST $T$ with edge set 
$$
E(u_1',C_1\backslash\{u_1'\})\cup E(u_1'',C_2\backslash\{u_1'\})\cup\{uu_1,u_1u_1',u_1u_1'',xu_2\}.
$$
So we assume $x\notin\{u_1,u_1',u_1''\}$.
Then $x\in C_1\backslash\{u_1'\}$ or $x\in C_2\backslash\{u_1''\}$. By symmetry, we only need to consider $x\in C_1\backslash\{u_1'\}$. Then there exists $x'\in N_{C_1}(x)\backslash\{u_1'\}$, and thus $G$ has a HIST $T$ with edge set 
$$
E(u_1',C_1\backslash\{x',u_1'\})\cup E(u_1'',C_2\backslash\{u_1''\})\cup\{uu_1,u_1u_1',u_1u_1'',xu_2,xx'\},
$$
 as shown in Figure~\ref{f6} (a).

%	So we can assume that for all $j\leq[\delta]$, $N_W(u_j)\subseteq C_i$ for some $i\in[2]$. Note that that $E(N(u),C_i)\neq\emptyset$ for $i\in[2]$. Here 

Next we consider the case that 
for each $j\in [2]$, 
$U_j\subseteq C_i$ %\red{($N(u_j)\cap C_i=\emptyset$ and $N_W(u_j)\subseteq C_{3-i}$)}
for some $i\in [2]$. 
Assume that $U_1\subseteq C_1$ and $U_2\subseteq C_2$. Then $|U_i|\geq1$ for $i\in[2]$.

We now claim that $u_1u_2\in E(G)$.
Suppose that 
$u_1u_2\notin E(G)$.
We first show that $C_i+u_i$ is a clique for each $i\in[2]$. 
Suppose that 
$C_i+u_i$ is not a clique. 
Since $C_i$ is a clique, 
$xu_i\notin E(G)$ for some  $x\in C_i$.
Obviously, $N(x)\cup N(u_i)\subseteq (C_i\backslash\{x\})\cup\{u\}$, 
implying that 
$$
|N(x)\cup N(u_i)|\leq|C_i|-1+1=\frac{n-3}{2}
<\frac{n-1}{2},
$$ 
a contradiction 
to the condition that $NC(G)\ge \frac{n-1}2$. 
Hence  $C_i+u_i$ is a clique of size $\frac{n-1}{2}$ for both $i=1,2$.
But, it follows that $G\cong H_1$,
a contradiction to the assumption too.

Now we know that $u_1u_2\in E(G)$. Since $G\not\cong H_2$, it is impossible that 
$|U_1|=1$ and $|U_2|=1$.
Thus, $|U_1|\geq2$ as $|U_1|\geq |U_2|$, say $|N_{C_1}(u_1)|\geq2$.
Let $u_2',u_2''\in U_1$ and $u_3'\in U_2$.
Then $G$ has a HIST $T$ with edge set 
$$
E(u_2',C_1\backslash\{u_2',u_2''\})\cup E(u_3',C_2\backslash\{u_3'\})\cup\{uu_2,u_1u_2,u_1u_2',u_1u_2'',u_2u_3'\},
$$
as shown in Figure~\ref{f6} (b).

Hence Proposition~\ref{prop5-1}
holds for the case $G[W]$ is disconnected and $\delta=2$.

\subsubsection{$\delta\ge 3$}

We show that 
the conclusion holds 
in the following case. 

\iffalse
By Lemma~\ref{le2.10}(i),
for each $i\in [2]$,  
\red{$$
	|C_i|\geq
	\frac{n+1}{2}-\delta>\frac{n+1}2-4\sqrt{n}\geq\frac{n+1}2-\frac{n-7}4=\frac{n+9}4.
	$$}
\fi

\vskip .5cm
\noindent {\bf Case 1}: 
There exists $j\in [\delta]$
such that $U_j\cap C_i\ne \emptyset$ 
for both $i=1,2$. Say $j=1$.
\vskip .5cm

By Lemma~\ref{le2.14}, for each $i\in [2]$, $C_i+u_1$ has a spanning tree $T_i$ in which only vertex $u_1$ may be of degree $2$. Thus, $G$ has a HIST $T$ with edge set $E(u,N(u))\cup E(T_1)\cup E(T_2)$.

\vskip .5cm
\noindent {\bf Case 2}: 
For each $j\in[\delta]$, there exists $i\in [2]$ such that $U_j\subseteq C_i$. %\red{(such that $U_j\cap C_i=\emptyset$ and $U_j\subseteq C_{3-i}$)}.
\vskip .5cm

For $i\in [2]$, let $Q_i$ be the set of 
$j\in [\delta]$ with $U_j\subseteq C_i$ and $U_j\ne \emptyset$. 
Since $G$ is connected,
$Q_i\ne \emptyset$
for each $i\in [2]$. 
We may assume that $i\in Q_i$ for 
each $i\in [2]$.  

If $|U_{j_1}|\ge 2$ and $|U_{j_2}|\ge 2$,
where $j_i\in Q_i$ for $i=1,2$, 
say $|U_1|\ge 2$ and $|U_2|\ge 2$, 
then 
by Lemma~\ref{le2.14}, 
for each $i\in [2]$, 
$C_i+u_i$ has
a spanning tree $T_i$ with $d_{T_i}(u_i)\ge 2$ 
and the property that 
only vertex $u_i$ may be of degree $2$.
Thus,  $G$ has a HIST $T$ with
edge set $E(u,N(u))\cup E(T_1)\cup E(T_2)$.

If $|U_{j_1}|=1$ and $|U_{j_2}|=1$,
where $j_i\in Q_i$ for $i=1,2$, 
say $|U_1|=1$ and $|U_2|=1$,
$|U_i|\leq1$ for all $i\in[\delta]$, then by Lemma~\ref{le2.8}, $N(u)$ is a clique. By Lemma~\ref{le2.14}, 
for each $i\in[2]$, 
$C_i+u_i$ has a HIST $T_i$ with $d_{T_i}(u_i)=1$.
Thus, $G$ has a HIST $T$ with
edge set $E(u_1,N[u]\backslash\{u_1,u_{\delta}\})\cup E(T_1)\cup E(T_2)\cup\{u_2u_{\delta}\}$.

Now it remains to consider the case
$|U_{j_1}|\ge 2$ for each $j_1\in Q_1$
and $|U_{j_2}|=1$ for each $j_2\in Q_2$. 
Particularly, 
$|U_1|\ge 2$ and $|U_2|=1$.
Since $\delta\geq3$, $u_j\in N(u_2)$ for some 
$j\in [\delta]\backslash \{2\}$.
By Lemma~\ref{le2.14}, 
for each $i\in [2]$, $C_i+u_i$ has a spanning tree $T_i$ 
with $d_{T_i}(u_i)\ge 3-i$
and the property that only $u_i$
may be of degree $2$ in $T_i$.
If $\delta\ge 4$,
\iffalse 
and $j=1$,
then 
$G$ has a HIST $T$ with edge set  $E(u,N(u)\backslash\{u_1\})\cup E(T_1)\cup E(T_2)\cup\{u_1u_2\}$. 
If $\delta\ge 4$ and $j\ge 3$,\fi 
then 
$G$ has a HIST $T$ with edge set $E(u,N(u)\backslash\{u_j\})\cup E(T_1)\cup E(T_2)\cup\{u_ju_2\}$.

Now assume that  $\delta=3$. 
By (\ref{e13}),  $\frac{n-5}{2}\leq|C_i|
\leq\frac{n-3}{2}$ for each $i\in[2]$.
We claim that each $C_i$ is a clique. Otherwise, by Lemma~\ref{le2.10} (ii), there exist two non-adjacent vertices $x,y\in C_i$ such that  $|N_{N(u)}(x)\cup N_{N(u)}(y)|\geq 3$.
However, 
$N_{N(u)}(x)\cup N_{N(u)}(y)\subseteq\{u_i,u_3\}$, a contradiction. 
Hence $C_i$ is a clique with 
$\frac{n-5}{2}\leq|C_i|
\leq\frac{n-3}{2}$.

\begin{figure}[H]	
	\begin{center}  
		\begin{tikzpicture}[scale=0.76]
			\tikzstyle{every node}=[font=\normalsize,scale=0.9]
			\draw (-5,9.5) node[above=0pt]{$u$};
			\draw (-6.3,8.3)node[left=0pt]{$u_1$};
			\draw (-5,8.3) node[right=0pt]{$u_2$};
			\draw (-3.7,8.3) node[right=0pt]{$u_3$};
			\draw (-6.05,6.7) node[right=0pt]{$y_1(y)$};		
			\draw (-7.8,6.7) node[right=0pt]{$y_2$};			
			\draw (-3.9,6.7) node[right=0pt]{$y'$};			
			\filldraw (-5,9.5)circle(0.55ex);% 黑点 u		
			\filldraw (-6.3,8.3)circle(0.55ex);% 黑点 u
			\filldraw (-3.7,8.3)circle(0.55ex);% 黑点 u
			\filldraw (-5,8.3)circle(0.55ex);% 黑点 u			
			\filldraw (-7,6.7)circle(0.55ex);% 黑点 u    
			\filldraw (-6,6.7)circle(0.55ex);% 黑点 u	    
			\filldraw (-3.9,6.7)circle(0.55ex);% 黑点 u				
			\filldraw (-5.5,5.7)circle(0.55ex);% 黑点 u	    
			\filldraw (-6.5,5.7)circle(0.55ex);% 黑点 u	    	
			\filldraw (-3.4,5.7)circle(0.55ex);% 黑点 u	    
			\filldraw (-4.4,5.7)circle(0.55ex);% 黑点 u	
			\filldraw (-6.2,5.7)circle(0.25ex);% 黑点 u
			\filldraw (-6,5.7)circle(0.25ex);% 黑点 u
			\filldraw (-5.8,5.7)circle(0.25ex);% 黑点 u			
			\filldraw (-3.7,5.7)circle(0.25ex);% 黑点 u
			\filldraw (-3.9,5.7)circle(0.25ex);% 黑点 u
			\filldraw (-4.1,5.7)circle(0.25ex);% 黑点 u	    				
			\draw[very thick,dashed] (-5,9.5)--(-6.3,8.3);
			\draw[very thick,dashed] (-5,9.5)--(-3.7,8.3);			
			\draw[very thick] (-6.3,8.3)--(-5,8.3);
			\draw[very thick] (-5,9.5)--(-5,8.3);
			\draw[very thick] (-7,6.7)--(-6.3,8.3);
			\draw[very thick] (-6,6.7)--(-6.3,8.3);
			\draw[very thick] (-6,6.7)--(-3.7,8.3);
			\draw[very thick] (-3.9,6.7)--(-5,8.3);			
			\draw[very thick] (-6,6.7)--(-6.5,5.7);
			\draw[very thick] (-6,6.7)--(-5.5,5.7);			
			\draw[very thick] (-3.9,6.7)--(-4.4,5.7);
			\draw[very thick] (-3.9,6.7)--(-3.4,5.7);			
			\draw[thick, dashed] (-7.7,5)rectangle(-4.7,7.3);
			\draw[thick, dashed] (-4.55,5)rectangle(-2.3,7.3);
			\draw[] (-2.8,5.4) node{\small$C_2$};
			\draw [](-7.1,5.4) node{\small$C_1$};
			\draw (-5,4.3) node[]{$(a)$};
			~~~~~~~~~~~~~~~~~~~~~~~~~~~~~~~~~~~~~~~~~~~~~~~~~~~~~		
			\draw (3,9.5) node[above=0pt]{$u$};
			\draw (1.7,8.3) node[left=0pt]{$u_1$};
			\draw (3,8.3) node[right=0pt]{$u_2$};
			\draw (4.3,8.3) node[right=0pt]{$u_3$};	
			\draw (0.7,6.7) node[left=0pt]{$y_2$};
			\draw (2.1,6.5) node[above=1pt]{$y_1$};
			\draw (2.7,6.7) node[right=0pt]{$y$};
			\draw (1.1,5.7) node[below=0pt]{$y_3$};
			\draw (4.7,6.7) node[right=0pt]{$y'$};			
			\filldraw (3,9.5)circle(0.55ex);% 黑点 u		
			\filldraw (1.7,8.3)circle(0.55ex);% 黑点 u
			\filldraw (4.3,8.3)circle(0.55ex);% 黑点 u
			\filldraw (3,8.3)circle(0.55ex);% 黑点 u			
			\filldraw (0.7,6.7)circle(0.55ex);% 黑点 u    
			\filldraw (1.7,6.7)circle(0.55ex);% 黑点 u	    
			\filldraw (2.7,6.7)circle(0.55ex);% 黑点 u	    
			\filldraw (4.7,6.7)circle(0.55ex);% 黑点 u				
			\filldraw (2.2,5.7)circle(0.55ex);% 黑点 u	    
			\filldraw (3.3,5.7)circle(0.55ex);% 黑点 u	    
			\filldraw (1.1,5.7)circle(0.55ex);% 黑点 u			
			\filldraw (4.2,5.7)circle(0.55ex);% 黑点 u	    
			\filldraw (5.2,5.7)circle(0.55ex);% 黑点 u				
			\filldraw (2.5,5.7)circle(0.25ex);% 黑点 u
			\filldraw (2.7,5.7)circle(0.25ex);% 黑点 u
			\filldraw (2.9,5.7)circle(0.25ex);% 黑点 u				
			\filldraw (4.5,5.7)circle(0.25ex);% 黑点 u
			\filldraw (4.7,5.7)circle(0.25ex);% 黑点 u
			\filldraw (4.9,5.7)circle(0.25ex);% 黑点 u	    				
			\draw[very thick,dashed] (3,9.5)--(1.7,8.3);
			\draw[very thick,dashed] (3,9.5)--(4.3,8.3);
			\draw[very thick] (3,9.5)--(3,8.3);
			\draw[very thick] (0.7,6.7)--(1.7,8.3);
			\draw[very thick] (1.7,6.7)--(1.7,8.3);
			\draw[very thick] (4.7,6.7)--(3,8.3);
			\draw[very thick] (2.7,6.7)--(4.3,8.3);			
			\draw[very thick] (3,8.3)--(1.7,8.3);		
			\draw[very thick] (1.7,6.7)--(1.1,5.7);
			\draw[very thick] (1.7,6.7)--(2.7,6.7);
			\draw[very thick] (2.7,6.7)--(2.2,5.7);
			\draw[very thick] (2.7,6.7)--(3.3,5.7);		
			\draw[very thick] (4.7,6.7)--(4.2,5.7);
			\draw[very thick] (4.7,6.7)--(5.2,5.7);		
			\draw[thick, dashed] (-0.2,5)rectangle(3.5,7.3);
			\draw[thick, dashed] (3.8,5)rectangle(6.1,7.3);
			\draw[] (5.6,5.4) node{\small$C_2$};
			\draw [](0.3,5.4) node{\small$C_1$};
			\draw (3,4.3) node[]{$(b)$};
		\end{tikzpicture}
	\end{center}	
	\caption{Two HISTs of $G$ when $\delta=3$ and $u_1u_2\in E(G)$}
	
	\label{f7}
\end{figure}
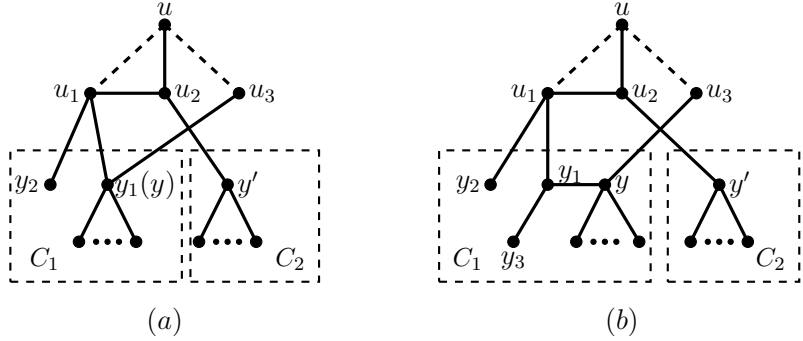

\vskip .5cm
\noindent {\bf Case 2.1}: 
$|U_1|\ge 2$, $|U_2|=1$, 
$\delta=3$ and $u_1u_2\in E(G)$.
\vskip .5cm

Assume that   $y_1,y_2\in U_1$ and  $y'\in U_2$. 
Since $d(u_3)\geq\delta=3$,
there exists $y\in N(u_3)\backslash\{u\}$.
If $y\in\{u_1,u_2,y_i,y'\}$,  where $i\in [2]$, 
then $G$ has a HIST $T$ with edge set 
$$
E(y_i,C_1\backslash\{y_i,y_{3-i}\})\cup E(y',C_2\backslash\{y'\})\cup\{uu_2,u_1u_2,u_1y_1,u_1y_2,u_2y',u_3y\},
$$
as shown in Figure~\ref{f7} (a). 

Now we assume that  $y\notin\{u_1,u_2,y_1,y_2,y'\}$ and
thus $N_{N(u)}(u_3)=\emptyset$.
Since $\delta=3$, $|U_3|\ge 2$, 
implying that $U_3\subseteq C_1$.
Note that $y\notin\{y_1,y_2\}$.
Since $C_1$ is a clique of size at least 
$\frac{n-5}2$ and $n\ge 270$,
there exists  $y_3\in N_{C_1}(y_1)\backslash\{y,y_2\}$. Then $G$ has a HIST $T$ with edge set  
$$
E(y,C_1\backslash\{y,y_2,y_3\})\cup E(y',C_2\backslash\{y'\})\cup\{uu_2,u_1u_2,u_1y_1,u_1y_2, u_2y',u_3y,y_1y_3\},
$$
as shown in Figure~\ref{f7} (b).

\vskip .5cm
\noindent {\bf Case 2.2}: 
$|U_1|\ge 2$, $|U_2|=1$, 
$\delta=3$ and $u_1u_2\notin E(G)$.
\vskip .5cm

\vskip.2cm 
If $u_2u_3\notin E(G)$, then 
$N(u_2)\subseteq \{u\}\cup U_2$, 
implying that $d(u_2)<3$, 
a contradiction to the condition 
that $\delta=3$. Thus $u_2u_3\in E(G)$.

It is known that $|C_1|\ge \frac{n-5}2$. 
It can be shown that 
$|C_1|=\frac{n-5}{2}$. 
Otherwise, $|C_1|\geq\frac{n-4}{2}$,
implying that 
$$
|C_2|=n-(\delta+1)-|C_1|
\le n-4-\frac{n-4}{2}=\frac{n-4}{2}. 
$$
Since $|U_2|=1$, there exists 
$x\in C_2\backslash N(u_2)$. 
Note that 
$N(x)\cup N(u_2)\subseteq(C_2\backslash\{x\})\cup\{u,u_3\}$, implying that 
$$
|N(x)\cup N(u_2)|\leq|C_2|-1+2
\leq\frac{n-4}{2}+1<\frac{n-1}{2},
$$ 
a contradiction to the condition of (\ref{Th1.5-e1}). Therefore, $|C_1|=\frac{n-5}{2}$ and $|C_2|=\frac{n-3}{2}$.

Next, it can be shown that $C'_1:=C_1+u_1$  is a clique.
Otherwise, as $C_1$ is a clique, 
there exists $y\in C_1\backslash N(u_1)$.
Note that $N(u_1)\cup N(y)\subseteq(C_1\backslash\{y\})\cup\{u,u_3\}$, 
implying that  
$$
|N(u_1)\cup N(y)|\leq|C_1|-1+2=\frac{n-5}{2}+1<\frac{n-1}{2},
$$ 
a contradiction to the condition of (\ref{Th1.5-e1}). 
Note that  $|C_1'|=|C_1|+1=\frac{n-3}2$.

Since $d(u_3)\geq\delta=3$, $u_3$ has at least one neighbor in $C_1'\cup C_2$. Note that $u_3$ cannot have neighbors in both $C_1$ and $C_2$.

If there exists $x_1,x_2\in N(u_3)\cap C'_1$, then $G$ has a HIST with edge set  
$$
E(x_1,C_1'\backslash\{x_1,x_2\})\cup E(x_3,C_2\backslash\{x_3\})\cup\{uu_2,u_2u_3,u_2x_3,u_3x_1,u_3x_2\},
$$
where $x_3$ is the only vertex in $U_2$, as shown in  Figure~\ref{f8} (a).
	
If $|N(u_3)\cap (C'_1\cup C_2)|\ge 2$ but $|N(u_3)\cap C'_1|\le 1$, then $N(u_3)\cap (C'_1\cup C_2)=\{u_1, z\}$, where $z\in C_2$. Observe that $G$ has a HIST with edge set $E(u_1, C_1)\cup E(z,C_2\backslash\{z\})\cup\{uu_1, u_1u_3, u_3u_2,u_3z\}$, as shown in  Figure~\ref{f8} (b).
	
If $|N(u_3)\cap (C'_1\cup C_2)|=1$, then the only vertex in $N(u_3)\cap (C'_1\cup C_2)$ is either a vertex contained in $\{u_1, x_3\}$, or a vertex contained in $(C_1'\cup C_2)\backslash\{u_1,x_3\}$, where $x_3$ is the only vertex in $U_2$. Then, $G$ is $H_3$, 
%or $H_4$ respectively, 
as shown in  Figure~\ref{f8} (c).

Hence Proposition~\ref{prop5-1}
holds for the case $G[W]$ is disconnected and $\delta\geq3$.

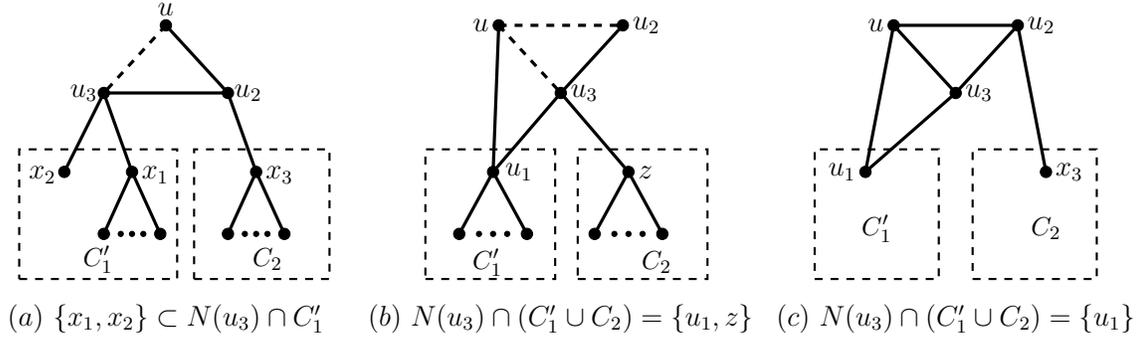
\begin{figure}[H]
	\begin{center}  
		\begin{tikzpicture}[scale=0.75]
			\tikzstyle{every node}=[font=\normalsize,scale=0.9]
			\draw (-4,9.5) node[above=0pt]{$u$};
			\draw (-5,8.3)node[left=1pt]{$u_3$};
			\draw (-3,8.3) node[right=1pt]{$u_2$};
			\draw (-4.6,6.85) node[right=0pt]{$x_1$};		
			\draw (-6.6,6.85) node[right=0pt]{$x_2$};			
			\draw (-2.4,6.85) node[right=0pt]{$x_3$};				
			\filldraw (-4,9.5)circle(0.55ex);% 黑点 u
			\filldraw (-5.1,8.3)circle(0.55ex);% 黑点 u
			\filldraw (-2.9,8.3)circle(0.55ex);% 黑点 u		
			\filldraw (-5.8,6.9)circle(0.55ex);% 黑点 u    
			\filldraw (-4.6,6.9)circle(0.55ex);% 黑点 u	    
			\filldraw (-2.4,6.9)circle(0.55ex);% 黑点 u			
			\filldraw (-4.1,5.8)circle(0.55ex);% 黑点 u	    
			\filldraw (-5.1,5.8)circle(0.55ex);% 黑点 u			
			\filldraw (-1.9,5.8)circle(0.55ex);% 黑点 u	    
			\filldraw (-2.9,5.8)circle(0.55ex);% 黑点 u				
			\filldraw (-4.8,5.8)circle(0.25ex);% 黑点 u
			\filldraw (-4.6,5.8)circle(0.25ex);% 黑点 u
			\filldraw (-4.4,5.8)circle(0.25ex);% 黑点 u		
			\filldraw (-2.2,5.8)circle(0.25ex);% 黑点 u
			\filldraw (-2.4,5.8)circle(0.25ex);% 黑点 u
			\filldraw (-2.6,5.8)circle(0.25ex);% 黑点 u				
			\draw[very thick,dashed] (-4,9.5)--(-5.1,8.3);
			\draw[very thick] (-4,9.5)--(-2.9,8.3);			
			\draw[very thick] (-5.1,8.3)--(-2.9,8.3);
			\draw[very thick] (-2.4,6.9)--(-2.9,8.3);			
			\draw[very thick] (-5.8,6.9)--(-5.1,8.3);
			\draw[very thick] (-4.6,6.9)--(-5.1,8.3);			
			\draw[very thick] (-4.6,6.9)--(-5.1,5.8);
			\draw[very thick] (-4.6,6.9)--(-4.1,5.8);			
			\draw[very thick] (-2.4,6.9)--(-2.9,5.8);
			\draw[very thick] (-2.4,6.9)--(-1.9,5.8);
			\draw[thick, dashed] (-6.6,5)rectangle(-3.8,7.3);
			\draw[thick, dashed] (-3.5,5)rectangle(-1.1,7.3);
			\draw[] (-2.2,5.35) node{\small$C_2$};
			\draw [](-5.2,5.35) node{\small$C_1'$};
			\draw (-4,4.3) node[]{$(a)$~$\{x_1,x_2\}\subset N(u_3)\cap C'_1$};
			~~~~~~~~~~~~~~~~~~~~~~~~~~~~~~~~~~~~~~~~~~~~~~~~~~~~~		
			\draw (1.9,9.5) node[left=0pt]{$u$};
			\draw (4.1,9.5) node[right=0pt]{$u_2$};		
			\draw (3,8.3) node[right=0pt]{$u_3$};			
			\draw (1.8,6.9) node[right=1pt]{$u_1$};
			\draw (4.2,6.9) node[right=0pt]{$z$};						
			\filldraw (3,8.3)circle(0.55ex);% 黑点 u			
			\filldraw (1.9,9.5)circle(0.55ex);% 黑点 u
			\filldraw (4.1,9.5)circle(0.55ex);% 黑点 u			
			\filldraw (1.8,6.9)circle(0.55ex);% 黑点 u	    
			\filldraw (4.2,6.9)circle(0.55ex);% 黑点 u			
			\filldraw (2.4,5.8)circle(0.55ex);% 黑点 u	    
			\filldraw (1.2,5.8)circle(0.55ex);% 黑点 u	    				
			\filldraw (4.8,5.8)circle(0.55ex);% 黑点 u	    
			\filldraw (3.6,5.8)circle(0.55ex);% 黑点 u				
			\filldraw (1.55,5.8)circle(0.25ex);% 黑点 u
			\filldraw (1.8,5.8)circle(0.25ex);% 黑点 u
			\filldraw (2.05,5.8)circle(0.25ex);% 黑点 u						
			\filldraw (4.45,5.8)circle(0.25ex);% 黑点 u
			\filldraw (4.2,5.8)circle(0.25ex);% 黑点 u
			\filldraw (3.95,5.8)circle(0.25ex);% 黑点 u	    				
			
			\draw[very thick,dashed] (1.9,9.5)--(4.1,9.5);
			\draw[very thick,dashed] (1.9,9.5)--(3,8.3);
			\draw[very thick] (4.1,9.5)--(3,8.3);
			\draw[very thick] (1.8,6.9)--(3,8.3);
			\draw[very thick] (4.2,6.9)--(3,8.3);
			\draw[very thick] (1.8,6.9)--(1.2,5.8);
			\draw[very thick] (1.8,6.9)--(2.4,5.8);
			\draw[very thick] (4.2,6.9)--(3.6,5.8);
			\draw[very thick] (4.2,6.9)--(4.8,5.8);			
			\draw[very thick] (1.9,9.5)--(1.8,6.9);	
			\draw[thick, dashed] (0.6,5)rectangle(2.9,7.3);
			\draw[thick, dashed] (3.3,5)rectangle(5.6,7.3);
			\draw[] (4.7,5.3) node{\small$C_2$};
			\draw [](1.7,5.3) node{\small$C_1'$};
			\draw (3,4.3) node[]{$(b)$~$N(u_3)\cap (C'_1\cup C_2)=\{u_1, z\}$};
			%%%%%%%%%%%%%%%%%%%%%%%%%%%%%%%%%%%%%%%%%%%%%%%%%%%%
			\draw (8.9,9.5) node[left=0pt]{$u$};
			\draw (11.1,9.5) node[right=0pt]{$u_2$};  
			\draw (8.4,6.9) node[left =0pt]{$u_1$}; 
			\draw (11.6,6.9) node[right =0pt]{$x_3$}; 
			\draw (10,8.3) node[right=0pt]{$u_3$};
			\filldraw (10,8.3)circle(0.55ex);% 黑点 u
			\filldraw (8.9,9.5)circle(0.55ex);% 黑点 u
			\filldraw (11.1,9.5)circle(0.55ex);% 黑点 u
			\filldraw (8.4,6.9)circle(0.55ex);% 黑点 u
			\filldraw (11.6,6.9)circle(0.55ex);% 黑点 u
			\draw[very thick] (8.9,9.5)--(11.1,9.5);
			\draw[very thick] (8.9,9.5)--(10,8.3);
			\draw[very thick] (11.1,9.5)--(10,8.3);
			\draw[very thick] (8.9,9.5)--(8.4,6.9);
			\draw[very thick] (11.1,9.5)--(11.6,6.9);
			\draw[very thick] (10,8.3)--(8.4,6.9);
			\draw[thick,dashed] (7.5,5)rectangle(9.7,7.3);
			\draw[thick,dashed] (10.3,5)rectangle(12.5,7.3);
			\draw[] (11.6,5.9) node{\small$C_2$};
			\draw [](8.6,5.9) node{\small$C_1'$};
			\draw (10,4.3) node[]{$(c)$~$N(u_3)\cap (C'_1\cup C_2)=\{u_1\}$};
		\end{tikzpicture}
	\end{center}	
	\caption{Three cases when $N(u_3)\cap (C'_1\cup C_2)\neq\emptyset$}
	
	\label{f8}
\end{figure}

\medskip 

This completes the proof of Theorem~\ref{Thm1.5}.\q

\vskip .2cm

    \section{Remarks}
	
    We do not know whether the conclusion of Theorem~\ref{Thm1.5} holds 
    when $n<270$,
	although it indeed holds for some graphs. For example, if $G$ is the Petersen graph, then 
	$\sigma(G)=6<n-1$ and $NC(G)=5\ge \frac{n-1}2$, where $n=|V(G)|=10$. It can be verified that the Petersen graph contains HISTs. 
	Thus, it is natural to propose 
 the following problem:

	\begin{prob} 
Replace
	the condition $n\ge 270$
	in Theorem~\ref{Thm1.5} 
	by a weaker one.
	\end{prob}

	\section*{Declaration of Competing Interest}
	The authors declare that they have no known competing financial interests or personal relationships that could have appeared to influence the work reported in this paper.
	
	\vskip 0.3cm
		
\end{document}